\documentclass{article}
\usepackage[english]{babel}
\usepackage{a4wide}
\usepackage[T1]{fontenc}
\usepackage{amsmath}
\usepackage{amssymb}
\usepackage{amsthm}
\usepackage{graphicx}
\usepackage{mathrsfs}

\def\N{\mathbb{N}}

\def\R{\mathbb{R}}
\def\D{\mathsf{D}}
\def\L{\textnormal{L}}

\def\H{\textnormal{H}}
\def\W{\textnormal{W}}
\def\pa{\partial}

\def\epsilon{\varepsilon}
\def\ov{\overline}
\def\a{\alpha}
\def\b{\beta}
\def\c{\gamma}
\def\d{\delta}
\def\O{\Omega}

\newcommand{\dd}{\mathrm{d}}

\setcounter{MaxMatrixCols}{10}

\newtheorem{Theo}{Theorem}[section]
\newtheorem{cor}[Theo]{Corollary}
\newtheorem{lem}[Theo]{Lemma}

\newtheorem{definition}[Theo]{Definition}
\newtheorem{remark}[Theo]{Remark}
\oddsidemargin  -0.2in
\evensidemargin 0.7in
\textwidth      6.8in
\headheight     -.1in
\topmargin      -0.1in
\textheight = 9.1in
\def\footnote{\@ifnextchar[{\@xfootnote}{\stepcounter {\@mpfn}\xdef\@thefnmark{\thempfn}\@footnotemark\@footnotetext}}
\newcommand{\ud}{\mathrm{d}}

\begin{document}

\begin{center}
\Large{On triangular reaction cross-diffusion systems with possible self-diffusion}
 \end{center}
\bigskip

\medskip
\centerline{\scshape A. Trescases}
\medskip
{\footnotesize
  \centerline{CMLA, ENS Cachan, CNRS}
  \centerline{61 Av. du Pdt. Wilson, F-94230 Cachan, France}
\centerline{E-mail: trescase@cmla.ens-cachan.fr}}
\bigskip

\begin{abstract} We present new results of existence of global solutions for a class of reaction cross-diffusion systems of two equations presenting a cross-diffusion term in the first equation, and possibly presenting a self-diffusion term in any (or both) of the two equations. This class of systems arises in Population Dynamics, and notably includes the triangular SKT system. In particular, we recover and extend existing results for the triangular SKT system. Our proof relies on entropy and duality methods.
\end{abstract}

%
%


\bigskip

\section{Introduction}
The purpose of this paper is to investigate existence and some properties of the solutions of the system
\begin{align}
\label{sku1} \pa_t u - \Delta_x [(d_u + d_\a u^\a + d_\b v^\b ) \, u] = u\, (r_u - r_{a}\, u^a - r_{b}\,v^b)& \qquad \text{in } \R_+\times\Omega,\\
\label{sku2} \pa_t v - \, \Delta_x [(d_v + d_\c v^\c)\, v]  = v\, (r_v - r_{c}\,v^c - r_d\, u^d)& \qquad \text{in } \R_+\times\Omega,\\
\label{sku3} \nabla_x u \cdot n = \nabla_x v\cdot n = 0& \qquad \text{on } \R_+\times\pa\Omega,\\
\label{sku4} u(0,\cdot)=u_{in}, \qquad v(0,\cdot)=v_{in}& \qquad \text{in } \Omega,
 \end{align}
where $u=u(t,x)\ge 0$, $v=v(t,x)\ge0$ are the unknowns, the variables $(t,x)$ browse $\R_+\times\Omega$ with $\Omega$ a bounded domain of $\R^m$ ($m\ge1$), $n=n(x)$ stands for the outward normal at point $x$ of the boundary $\pa \O$, $u_{in}$ and $v_{in}$ are nonnegative initial data, and the remaining terms are nonnegative constant parameters satisfying
\begin{equation}\label{cdt:param}\begin{split}
\D:=\{d_u,d_v,d_\a,d_\b,d_\c,r_u,r_v,r_a,r_b,r_c,r_d,a,b,c,d,\a,\b,\c\}\in (\R_+^\ast)^{15}&\times\R_+\times\R_+^\ast\times\R_+,\\
\text{($\a>0$, $d<2+\a$,  $a<1+\a$) or ($\a=0$, $d\le 2$,  $a\le 1$).}&
\end{split}
\end{equation}

The origin of this system is to be found in a well-studied system arising in Population Dynamics, known in the literature as the SKT system. 
The SKT system was introduced in \cite{SKT} to model spatial segregation in two competing species of (let us say) animals (see also \cite{Okubo}). It has since then attracted the interest of many mathematicians, leading to a rich literature on the question of the existence of solutions (see for example \cite{ChenJungel06} and references therein) and on the analysis of equilibria and stability (patterns are shown to appear; see for example \cite{IMN}). Writing $u$ and $v$ the respective densities of the two different species, it takes the following form
\begin{equation}\label{sktoriginal}\begin{split}
\pa_t u - \Delta_x [(d_u + d_\a u + d_\b v ) \, u] = u\, (r_u - r_{a}\, u - r_{b}\,v) \qquad \text{in } \R_+\times\Omega,\\
\pa_t v - \, \Delta_x [(d_v + d_\c v + d_\d u)\, v]  = v\, (r_v - r_{c}\,v - r_d\, u) \qquad \text{in } \R_+\times\Omega,\\
\nabla_x u \cdot n = \nabla_x v\cdot n = 0 \qquad \text{on } \R_+\times\pa\Omega.
\end{split}\end{equation}
When $d_u=d_v=d_\a=d_\b=d_\c=d_\d=0$, the system \eqref{sktoriginal} reduces to the standard Lotka-Volterra competition ODS. The terms $r_u$, $r_v$ are the intrinsic growth of the species, while $r_b$ and $r_d$ measure the demographic effect of the interspecific competition, and $r_a$, $r_c$ indicate the demographic effect of the intraspecific competition.
When non-zero, the terms $\Delta_x [(d_u + d_\a u + d_\b v ) \, u]$ and $\Delta_x [(d_v + d_\c v + d_\d u)\, v]$ model the spatial movements of the individuals in the domain $\O$. The positive constants $d_u$, $d_v$ are standard diffusion rates, which indicate the frequency of the random walk of the individuals inside each species. The nonnegative constants $d_\b$ and $d_\d$ are usually referred to as "cross-diffusion" coefficients, and ecologically measure the repulsive effect, on the individuals of one species, of the presence of the individuals of the other species (as a result of the interspecific competitive pressure). The nonnegative constants $d_\a$ and $d_\c$, referred to as "self-diffusion" coefficients, measure the repulsive effect on the individuals of the presence of the individuals of the same species (as a result of the intraspecific competitive pressure).

The system \eqref{sktoriginal} is often called "triangular" when $d_\d=0$. From the point of view of modeling, this means that only the individuals of the first species tend to avoid the individuals of the other species. 
It therefore takes the form
\begin{equation}\label{sktoriginaltriang}\begin{split}
\pa_t u - \Delta_x [(d_u + d_\a u + d_\b v ) \, u] = u\, (r_u - r_{a}\, u - r_{b}\,v) \qquad \text{in } \R_+\times\Omega,\\
\pa_t v - \, \Delta_x [(d_v + d_\c v)\, v]  = v\, (r_v - r_{c}\,v - r_d\, u) \qquad \text{in } \R_+\times\Omega,\\
\nabla_x u \cdot n = \nabla_x v\cdot n = 0 \qquad \text{on } \R_+\times\pa\Omega.
\end{split}\end{equation}
In this case, the second equation is coupled to the first one only through zeroth-order terms (reaction), while in the full system \eqref{sktoriginal} with $d_\d>0$ both equations are coupled to the other one through both zeroth-order (reaction) and second-order terms (cross-diffusion). The full system \eqref{sktoriginal} has a completely different structure from the triangular case (in particular it is possible to exhibit an entropy structure when $d_\d>0$, see \cite{ChenJungel06} and \cite{DLMT}, but this entropy structure degenerates when $d_\d=0$).

In our study, we consider the class of systems \eqref{sku1}--\eqref{sku3}. That is, we focus on a triangular type of cross-diffusion ($d_\d=0$) as in the system \eqref{sktoriginaltriang}, but in contrast to the system \eqref{sktoriginaltriang} where the diffusion rates $(d_u + d_\a u + d_\b v,\, d_v + d_\c v + d_\d u)$ and the growth rates $(r_u - r_{a}\, u - r_{b}\,v,\,r_v - r_{c}\,v - r_d\, u)$ are required to be linear functions of $u$ and $v$, in \eqref{sku1}--\eqref{sku3} we allow these functions to be more general power laws (with suitably chosen powers, \eqref{cdt:param}). Note that the class of systems we consider includes \eqref{sktoriginaltriang} (when $\a=\b=\c=a=b=c=d=1$).\\

We now present our main mathematical result for this class of systems.

\subsection{Main Theorem}

We clarify the notion of weak solution we will use in the
\begin{definition}\label{def:weaksol}
Let $\Omega$ be a smooth bounded domain of $\mathbb{R}^m$ ($m\in \N^\ast$) and let $\D\in (\R_+^\ast)^{15}\times\R_+\times\R_+^\ast\times\R_+$.
Let $u_{in}:=u_{in}(x)\ge 0$ and $v_{in}:=v_{in}(x)\ge 0$ be two functions lying in $L^1(\Omega)$.
\par
 A couple of functions $(u,v)$ such that $u:=u(t,x)\ge0$ and $v:=v(t,x)\ge0$, and lying in $L_{\text{loc}}^{\max(1+a,d)}(\R_+\times\ov{\Omega})\times L_{\text{loc}}^{\infty}(\R_+\times\ov{\Omega})$ is a (global) {\bf weak solution} of \eqref{sku1}-\eqref{sku4} 
 if 
$$\nabla_x \left[(d_u+ d_\a u^\a+d_\b v^\b)\,u\right],\;\nabla_x \left[(d_v+ d_\c v^\c)\,v\right] \;\in\; L_{\text{loc}}^{1}(\R_+\times\ov{\Omega})$$
and, for all test functions $\psi_1$, $\psi_2 \in C^1_c(\R_+ \times {\ov{\Omega}})$, we have the identities
\begin{equation}\label{eq:weak_form}
 - \int_0^{\infty}\int_{\Omega} (\pa_t \psi_1)\, u - \int_{\Omega} \psi_1(0,\cdot)\, u_{in} 
+ \int_0^{\infty}\int_{\Omega} \nabla_x \psi_1 \cdot \nabla_x \left[(d_u+ d_\a u^\a+d_\b v^\b)\,u\right] =  \int_0^{\infty}\int_{\Omega} \psi_1\, u\, (r_u - r_{a}\, u^a - r_{b}\,v^b), 
\end{equation}
\begin{equation*}
 - \int_0^{\infty}\int_{\Omega} (\pa_t \psi_2)\, v - \int_{\Omega} \psi_2(0,\cdot)\, v_{in} 
+ \int_0^{\infty}\int_{\Omega} \nabla_x \psi_2 \cdot  \nabla_x \left[(d_v+ d_\c v^\c)\,v\right] = \int_0^{\infty}\int_{\Omega} \psi_2\,
 v\, (r_v - r_{c}\,v^c - r_d\, u^d). 
 \end{equation*}
Note that the assumptions on $u_{in}$, $v_{in}$, $u$, $v$, $\psi_1$, $\psi_2$ ensure that all integrals in the two identities above are finite.
\end{definition}

Our main result is contained in the

\begin{Theo}\label{theo_ex_csd}
Let $\Omega$ be a smooth bounded domain of $\mathbb{R}^m$ ($m\in \N^\ast$). 
Let the coefficients of system (\ref{sku1}) -- (\ref{sku2}) satisfy
condition \eqref{cdt:param}.
Consider initial data $u_{in}\ge 0$, $v_{in}\ge 0$ such that $u_{in} \in \L^{2}(\Omega)$
,  $v_{in}\in \L^\infty(\Omega)$.
\medskip

Then,\\
\textbf{i)} there exists $u=u(t,x)\ge0$, $v=v(t,x)\ge0$ such that $(u,v)\in L_{\text{loc}}^{2+\a}(\R_+\times\ov{\Omega})\times L_{\text{loc}}^{\infty}(\R_+\times\ov{\Omega})$ and $(u,v)$ is a (global) weak solution of system (\ref{sku1}) -- (\ref{sku4}) in the sense of Definition~\ref{def:weaksol}.
\par
Furthermore, this solution $(u,v)$ satisfies for all $T>0$
\begin{eqnarray}\label{th_es1}
\sup_{[0,T]\times\O} v \le \max\left\{ \sup_\Omega v_{in}, \left(\frac{r_v}{r_c}\right)^{1/c} \right\},\\
\label{th_es2}\int_0^T\int_{\O} u^{2+\a}\leq C(\Omega,T, u_{in},v_{in},\D),\\
\label{th_es3}\int_0^T \int_\O \left|\nabla_x [v^{p/2}]\right|^2
\le C(p,\O,T,u_{in},v_{in},\D) \qquad (\text{for all } 0<p<\infty),\\
\int_\O u (T) \, + \, \int_0^T \int_\O \left|\nabla_x [(1+u)^{\a/2}]\right|^2
\le C(\O,T,u_{in},v_{in},\D) \qquad (\text{if } \a>0),\\
\label{th_es5}\int_\O u (T) \, + \, \int_0^T \int_\O \left|\nabla_x [\log(1+u)]\right|^2
\le C(\O,T,u_{in},v_{in},\D) \qquad (\text{if } \a=0),
\end{eqnarray}
where the constant $C(\O,T,u_{in},v_{in},\D)$ only depends on the domain $\O$ (and the dimension $m$), the time $T$, the norms of the initial data $\|u_{in}\|_{\L^2(\Omega)}$ and $\|v_{in}\|_{\L^\infty(\O)}$ and the choice of parameters $\D$, and the constant $C(p,\O,T,u_{in},v_{in},\D)$ only depends on the same quantities and the parameter $p$.\\
If $u_{in}$ furthermore satisfies $u_{in}(x)>0$ a.e. on $\Omega$ and $\log u_{in}\in \L^1(\Omega)$, resp., if $v_{in}$ furthermore satisfies $v_{in}(x)>0$ a.e. on $\Omega$ and $\log v_{in}\in \L^1(\Omega)$, then for all $T>0$
\begin{eqnarray}
\label{th_es6u}
\int_\O |\log u| (T) \, + \, d_u \int_0^T \int_\O \left|\nabla_x [\log u]\right|^2
\le  \int_\O |\log u_{in}| \, + \,  C(\O,T,u_{in},v_{in},\D),\\
\label{th_es6v}\text{resp., }\quad \int_\O |\log v| (T) \, + \, d_v \int_0^T \int_\O \left|\nabla_x [\log v]\right|^2
\le \int_\O |\log v_{in}| \, + \,  C(\O,T,u_{in},v_{in},\D).
\end{eqnarray}
\textbf{ii)} If $\a=0$, we have the additional estimate, for some $\nu=\nu(\O,v_{in},\D)>0$ depending only on the domain $\O$ (and $m$), the norm $\|v_{in}\|_{\L^\infty(\O)}$ and the parameters $\D$, for all $T>0$, and for some $C_1(\Omega,T, u_{in},v_{in},\D)>0$ depending only on the domain $\O$ (and $m$), the time $T$, the norms $(\|u_{in}\|_{\L^2(\Omega)},\,\|v_{in}\|_{\L^\infty(\O)})$ and the parameters $\D$,
\begin{equation}\label{th_es7}
\int_0^T\int_{\O} u^{2+\nu}\leq C_1(\Omega,T, u_{in},v_{in},\D).
\end{equation}
\textbf{iii)} If $\c=0$, assuming furthermore that $v_{in}\in \W^{2,q}(\O)$ for some $q>1$ satisfying
\begin{equation*}
1<q\le (2+\a)/d \quad \text{if } \a>0, \qquad 1<q\le (2+\nu)/d \quad \text{if } \a=0,
\end{equation*}
(and assuming furthermore the compatibility condition $\nabla_x v_{in}\cdot n=0$ on $\pa \O$ if $q\ge 3$), we have the additional estimate, for all $T>0$,
\begin{equation}\label{th_es8}
\int_0^T \int_\O |\pa_t v|^{q} \,+\,\int_0^T \int_\O |\nabla_x^2 v|^{q} \,+\, \int_0^T \int_\O \left|\nabla_x v \right|^{2q}
\le C_2(q,\O,T,u_{in},v_{in},\D),
\end{equation}
where the constant $C_2(q,\O,T,u_{in},v_{in},\D)$ only depends on the parameter $q$, the domain $\O$ (and the dimension $m$), the time $T$, the norms of the initial data $(\|u_{in}\|_{\L^2(\Omega)},\,\|v_{in}\|_{\L^\infty\cap \W^{2,q}(\O)})$ and the parameters $\D$.
\end{Theo}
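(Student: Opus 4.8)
My plan is to obtain the solution as a limit of solutions of a regularized nondegenerate system, establishing all the bounds \eqref{th_es1}--\eqref{th_es8} first uniformly at the level of the approximation and then passing to the limit. The structural point I would exploit throughout is that the system is \emph{triangular} ($d_\delta=0$): in \eqref{sku2} the coupling to $u$ enters only through the nonpositive reaction contribution $-r_du^d\le0$, so $v$ can be controlled independently of $u$, and \eqref{sku1} can then be treated as a scalar quasilinear equation in $u$ with a coefficient involving the already-controlled $v$. Concretely I would regularize by truncating the powers $u^\alpha,u^a,u^d$ and $v^\gamma,v^\beta,v^b,v^c$ at height $n$ (the diffusions staying nondegenerate thanks to $d_u,d_v>0$), so that a classical nonnegative solution $(u_n,v_n)$ exists on each $[0,T]\times\O$ by standard quasilinear parabolic theory together with a fixed-point argument.

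The first two estimates are the backbone. For \eqref{th_es1} I would note that the constant $M:=\max\{\sup_\O v_{in},(r_v/r_c)^{1/c}\}$ is a supersolution of \eqref{sku2}, since its diffusion term vanishes and $M(r_v-r_cM^c-r_du^d)\le0$; the comparison principle for the strictly increasing flux $v\mapsto(d_v+d_\gamma v^\gamma)v$ then yields $v_n\le M$, whence $d_\beta v_n^\beta$ is bounded in \eqref{sku1}. For \eqref{th_es2} I would use the duality method: writing $A:=d_u+d_\alpha u^\alpha+d_\beta v^\beta\ge d_u$ and using $u(r_u-r_au^a-r_bv^b)\le r_uu$, the function $\tilde u:=e^{-r_ut}u$ satisfies $\pa_t\tilde u-\Delta_x(A\tilde u)\le0$. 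Testing against the backward dual problem $-\pa_tw-A\Delta_xw=A\tilde u$, $w(T)=0$, $\nabla_xw\cdot n=0$, Pierre's lemma gives $\int_0^T\int_\O A\tilde u^2\le C\|u_{in}\|_{L^2}^2$, hence $\int_0^T\int_\O Au^2\le C(T)$; since $Au^2\ge d_\alpha u^{2+\alpha}$ this is exactly \eqref{th_es2}. The subcriticality assumptions $a<1+\alpha$ and $d<2+\alpha$ in \eqref{cdt:param} are precisely what makes $u^{1+a}$ and $u^d$ controlled by $u^{2+\alpha}$.

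The remaining bounds are energy/entropy identities obtained by testing with suitable nonlinearities. Testing \eqref{sku2} with $v^{p-1}$ and using that the derivative of $v\mapsto(d_v+d_\gamma v^\gamma)v$ is $\ge d_v$ gives \eqref{th_es3} after absorbing the bounded reaction. Testing \eqref{sku1} with the primitive $h$ of $(1+u)^{\alpha-2}$ (so $h'(u)=(1+u)^{\alpha-2}$, which reads $u/(1+u)$ when $\alpha=0$) controls $\int_0^T\int_\O|\nabla_x[(1+u)^{\alpha/2}]|^2$ (resp.\ $\int_0^T\int_\O|\nabla_x[\log(1+u)]|^2$), the nondegeneracy $d_u>0$ providing coercivity and the cross term $\propto\int\int u\,v^{\beta-1}\nabla_xv\cdot\nabla_xu$ being absorbed by Young's inequality with the help of \eqref{th_es3}; combined with the mass bound $\int_\O u(T)\le e^{r_uT}\int_\O u_{in}$ (test with $1$) this gives the fourth estimate of the theorem (resp.\ \eqref{th_es5}). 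Testing \eqref{sku1}, \eqref{sku2} with $1/u$, $1/v$ produces the log-entropies \eqref{th_es6u}--\eqref{th_es6v}, the constants $d_u,d_v$ coming from the diffusion lower bounds. For \textbf{ii)} ($\alpha=0$) the coefficient $A=d_u+d_\alpha+d_\beta v^\beta$ is now bounded, $d_u+d_\alpha\le A\le d_u+d_\alpha+d_\beta M^\beta$, and the self-improving $L^p$ version of the duality estimate (with an exponent range depending only on the ratio of these bounds) upgrades \eqref{th_es2} to \eqref{th_es7}. For \textbf{iii)} ($\gamma=0$), \eqref{sku2} becomes the semilinear heat equation $\pa_tv-(d_v+d_\gamma)\Delta_xv=v(r_v-r_cv^c-r_du^d)$ whose right-hand side lies in $L^q$ by \eqref{th_es2} (resp.\ \eqref{th_es7}) and the constraint on $q$; maximal $L^q$-regularity for the Neumann heat semigroup, with $v_{in}\in W^{2,q}$ and the stated compatibility, controls $\pa_tv$ and $\nabla_x^2v$, and $\nabla_xv\in L^{2q}$ follows by Gagliardo--Nirenberg interpolation with $v\in L^\infty$, giving \eqref{th_es8}.

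It remains to pass to the limit $n\to\infty$. From \eqref{th_es1}, \eqref{th_es3} with $p=2$, and a time-regularity bound read off \eqref{sku2}, Aubin--Lions gives $v_n\to v$ strongly and a.e. The main obstacle is the strong compactness of $u_n$: the duality bound provides only $L^{2+\alpha}$, which is exactly enough to render the flux $\nabla_x[(d_u+d_\alpha u^\alpha+d_\beta v^\beta)u]$ and the subcritical reaction terms equi-integrable (hence convergent by Vitali once a.e.\ convergence is secured), but not compact on its own. To extract an a.e.\ convergent subsequence I would combine the spatial regularity carried by $\nabla_x[(1+u_n)^{\alpha/2}]\in L^2$ (resp.\ $\nabla_x[\log(1+u_n)]\in L^2$) with the weak time-equicontinuity of $u_n$ coming from the equation (its flux lying in $L^1$), via a nonlinear Aubin--Lions/Dubinskii-type compactness lemma. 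I expect this to be the hardest step, and hardest precisely when $\alpha=0$, where the only spatial gradient information is the logarithmic bound \eqref{th_es5}; there one must lean on the improved integrability \eqref{th_es7} and on the nondegeneracy $A\ge d_u>0$ to obtain strong convergence of $u_n$ and thereby identify every nonlinear term in the weak formulation of Definition~\ref{def:weaksol}.
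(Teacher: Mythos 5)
Your proposal is correct in substance and its core estimates coincide with the paper's: the supersolution/maximum principle for $v$ giving \eqref{th_es1}, Pierre-type duality giving \eqref{th_es2}, the entropies obtained from the multipliers $v^{p-1}$, $u/(1+u)$ (up to an additive constant, the paper's $\phi'(u)=2-1/(1+u)$), $1/u$ and $1/v$, the CDF14-type improved duality estimate for $\alpha=0$ in ii), and heat-kernel maximal regularity for $\gamma=0$ in iii). The genuine differences are in the approximation and the compactness. The paper does not truncate the nonlinearities: it uses a semi-discrete implicit Euler scheme imported from \cite{DLMT}, which comes with existence, strict positivity $u_k,v_k\ge\eta_{\D,\tau}>0$, and a rigorously proven discrete duality bound \eqref{discr_es:dual} already built in; your truncation route is plausible but leaves to "standard theory" the global classical solvability and strict positivity of the truncated triangular system, which is real work. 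More importantly, the step you flag as hardest dissolves: from \eqref{th_es2} and the entropy bound one writes $\nabla_x u=(1+u)\times\frac{\nabla_x u}{1+u}\in \L^{2+\alpha}\times \L^{2}\subset \L^{1}$ (valid even when $\alpha=0$, since then $u\in \L^2$ and $\nabla_x\log(1+u)\in \L^2$), so $u$ is uniformly bounded in $\L^1_t(\W^{1,1}_x)$ and the classical (discrete) Aubin--Lions lemma with $\W^{1,1}\hookrightarrow\hookrightarrow \L^1\hookrightarrow \W^{-2,1}$ gives strong $\L^1$ compactness directly; no Dubinskii-type nonlinear lemma is needed. In the borderline cases ($\alpha=0$ with $a=1$ or $d=2$) the paper cannot use the duality improvement at the time-discrete level, so it performs a two-stage limit, approximating the exponents by $a_\epsilon\uparrow a$, $d_\epsilon\uparrow d$ and proving Lemma \ref{jeveuxunnom} for \emph{very weak} solutions with merely bounded $M$ (including a delicate regularization-and-uniqueness step); your plan to run the $\L^{2+\nu}$ duality bound directly on the continuous-time truncated problem, where the coefficient is smooth, would avoid that machinery and is arguably cleaner — this is the main structural payoff of your regularization. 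One technical caveat: for $\alpha>0$ your multiplier $h'(u)=(1+u)^{\alpha-2}$ has $u^2h'(u)\sim(1+u)^{\alpha}$ unbounded, so the cross term $\int u\,h'(u)\,\nabla_x u\cdot\nabla_x v^\beta$ cannot be absorbed against the $d_u$-term with \eqref{th_es3} alone; you must split it against the self-diffusion coercive piece $\sim(1+u)^{2\alpha-2}|\nabla_x u|^2$ (using $d_\alpha>0$, guaranteed by \eqref{cdt:param}) together with \eqref{th_es3} at $p=2\beta$, whereas the paper's uniform multiplier satisfies $\phi''(z)z^2\le1$ and sidesteps the issue for every $\alpha\ge0$.
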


We list in the remark below some possible extensions of the results mentioned in Theorem \ref{theo_ex_csd}.
\begin{remark}
Thanks to the bound \eqref{th_es1} given by a maximum principle for equation \eqref{sku2}, the power laws "$v\mapsto v^\b$" and "$v\mapsto v^b$" in \eqref{sku1} can be replaced by any continuous function of $v$ on $\R_+$ which is smooth ($C^1(\R_+^\ast)$) and positive-valued on $\R_+^\ast$. The power law "$v\mapsto v^\c$" in \eqref{sku2} can be replaced by any continuous function on $\R_+$ which is non-decreasing, smooth, and positive-valued on $\R_+^\ast$. Following the same idea, but furthermore ensuring that the maximum principle remains valid for \eqref{sku2}, the power law "$v\mapsto v^c$" can be replaced by any continuous function on $\R_+$ which is smooth, positive-valued on $\R_+^\ast$ and which furthermore tends to $+\infty$ in $+\infty$. With these replacements, all results of Theorem \ref{theo_ex_csd} hold, with the bound \eqref{th_es1} adapted when necessary, and with the condition $\c=0$ in iii) replaced by the condition that the function replacing "$v\mapsto v^\c$" is constant.\\
The power laws "$u\mapsto u^a$" and "$u\mapsto u^d$" can be replaced by any continuous function of $u$ on $\R_+$, smooth and positive-valued on $\R_+^\ast$, and dominated by (or having the same behaviour as) "$u\mapsto u^a$", resp. "$u\mapsto u^d$" in $+\infty$. The power law "$u\mapsto u^\a$" can be replaced by any non-decreasing continuous function of $u$ on $\R_+$ which is smooth and positive-valued on $\R_+^\ast$ and have the same behaviour as "$u\mapsto u^\a$" in $+\infty$. With these replacements, all results of Theorem \ref{theo_ex_csd} hold.\\

Assume $\a=0$. In this case, an estimate of type \eqref{th_es2} ensures enough integrability (to get compactness) for the terms $r_a u^{1+a}$ in \eqref{sku1} and $r_d v u^d$ in \eqref{sku2} only when $a<1$ and $d<2$. The slightly better estimate \eqref{th_es7} is therefore crucial to estimate these terms in the two cases ($a=1$, $d\le2$) and ($a<1$, $d=2$). It even treats these terms when $a<1+\nu$ and $d<2+\nu$. As a consequence, we expect the results of Theorem \ref{theo_ex_csd} to hold when the condition ($\a=0$, $a\le 1$, $d\le2$) is replaced by the wider condition ($\a=0$, $a<1+\nu$, $d<2+\nu$). Note that $\nu$ can indeed be chosen independent of $a$ and $d$ (see Section \ref{subsec:alpha0}).\\

Assume $\a\ge 0$ and $1+a,\,d<2+\a$. As seen in Sections \ref{sec:apriori}--\ref{sec:existence}, in this case the proof of existence entirely relies on estimates of the type \eqref{th_es1}--\eqref{th_es5} (in particular we do not use estimate \eqref{th_es7}). As shown in the sequel, the proof of estimates of the type \eqref{th_es1}--\eqref{th_es5} only requires $u_{in}$ to be in $\L^1\cap\H^{-1}_m(\Omega)$, where the space $\H^{-1}_m(\Omega)$ is defined in Section \ref{subsec:notations}. Therefore in the case $\a\ge 0$ and $1+a$, $d<2+\a$, the results i) and iii) for any $1<q\le (2+\a)/d$ hold with the assumption $u_{in}\in \L^2(\Omega)$ replaced by the weaker assumption $u_{in}\in \L^1\cap\H^{-1}_m(\Omega)$ (and with the constants $C$ and $C_2$ depending on $\|u_{in}\|_{\L^1\cap\H^{-1}_m(\Omega)}$ instead of $\|u_{in}\|_{\L^2(\Omega)}$).\\

Assume $\a=0$. In ii), estimate \eqref{th_es7} is a consequence of a Lemma relying on duality techniques (namely, Lemma \ref{jeveuxunnom}), which is adapted from a similar result from \cite{CDF14}. This Lemma can be somewhat improved. Indeed, following Remark 2.3 in \cite{CDF14}, the constraint $u_{in}\in \L^2(\O)$ can be replaced by the weaker constraint $u_{in}\in \L^{2-\nu_1}(\O)$ where $\nu_1$ is a small positive constant. Therefore, all results of Theorem \ref{theo_ex_csd} hold when $u_{in}$ only belongs to the space $\L^{2-\nu_1}(\O)$ (where $\nu_1$ only depends on $\O$, $m$, $\|v_{in}\|_{\L^\infty(\O)}$ and $\D$), with the constants $C_1$ and $C_2$ depending on $\|u_{in}\|_{\L^{2-\nu_1}(\O)}$ instead of $\|u_{in}\|_{\L^{2-\nu_1}(\O)}$.\\

Assume $\c=0$.  Estimate \eqref{th_es8} being a direct consequence of the properties of the heat kernel (see Section \ref{subsec:gamma0}), we can replace the set $\W^{2,q}(\O)$ in iii) by the optimal set to apply the properties of the heat kernel, that is, the fractional Sobolev space $\W^{2-2/q,q}(\O)$. Furthermore, the compatibility condition required for the case $q=3$ is actually slightly weaker than "$\nabla_x v_{in}\cdot n=0$ on $\pa \O$" (see for example \cite{LSU}).
\end{remark}

\subsection{In the literature}

In the last decades, mathematicians dedicated a considerable effort to the question of the existence of solutions for systems of the form \eqref{sku1}--\eqref{sku3}, and particularly for the original system \eqref{sktoriginaltriang}. 

The local (in time) existence of classical solutions was established by Amann in 1990 in the two papers \cite{AmannII}, \cite{AmannIII}. His theorem also provides a criterium to show that these solutions are global: it suffices to prove that the solutions do not blow-up in finite time in suitable Sobolev spaces.

The global existence for the original system \eqref{sktoriginaltriang} has been investigated under various restrictive assumptions. Most results rely on Amann's theory, therefore the problem is to prove bounds in appropriate Sobolev spaces. One of the main difficulties lies in the use of Sobolev embedding theorems in the parabolic estimates, which provide satisfactory results only in low dimension. Therefore, many existing results require strong restrictions on the dimension and/or on the parameters of the system (typically, one assumes that the cross-diffusion is weak, in the sense that the cross-diffusion term $d_\b$ is small compared to some other parameters), see \cite{ChoiLuiYamada03}, \cite{ChoiLuiYamada04}, \cite{LouNiWu}, \cite{MatanoMimura}, \cite{Mimura81}, \cite{Shim}, \cite{Tuoc07}, \cite{Tuoc08}, \cite{Yagi}. See \cite{DesTres} for a more detailed bibliography.

So far, three groups managed to remove this type of assumptions on the dimension and/or parameters, in particular cases: Choi, Lui and Yamada in \cite{ChoiLuiYamada04} (in the presence of self-diffusion in the first equation and in the absence of self-diffusion in the second equation, i.e. $d_\a>0$ and $d_\c=0$) and, very recently, Desvillettes and Trescases in \cite{DesTres} (in the absence of self-diffusion in both equations, i.e. $d_\a=d_\c=0$), and Hoang, Nguyen and Phan in \cite{HNP} (in the presence of self-diffusion in the first equation, i.e. $d_\a>0$). For the original system \eqref{sktoriginaltriang}, our result is the first one treating the case $(d_\a=0,d_\c>0)$. Furthermore, it provides an unifying proof for the cases with and without self-diffusion (in one or both equations).

We now mention some results of existence of global solutions for systems of form \eqref{sku1}--\eqref{sku3}. Wang obtained it in [27] in the presence of self diffusion in the first equation ($d_\a>0$) and in the absence of self diffusion in the second equation ($d_\c=0$), under a condition (depending on the dimension) of smallness of the parameter $d$ w.r.t. the parameter $a$. The case without self-diffusion ($d_\a=d_\c=0$) was solved by Pozio and Tesei in \cite{PozioTesei} under some strong assumption on the reaction coefficients, and by Yamada in \cite{Yamada} under the assumption $a > d$. We also mention the work of Murakawa \cite{Murakawa} in which the reaction terms considered are Lipschitz continuous functions of $u,\,v$ (and no self-diffusion appears). The results in the case without self-diffusion were extended by Desvillettes and Trescases in \cite{DesTres}. More precisely, in \cite{DesTres}, the authors obtain global weak solutions for systems of the form \eqref{sku1}--\eqref{sku3} in the absence of self-diffusion ($d_\a=d_\c=0$) with the following constraint on the parameters: ($\b\ge 1$, $a\le 1$, $d\le 2$) or ($\b\ge 1$, $a<d$). (Note that this indeed includes the original system \eqref{sktoriginaltriang} when $d_\a=d_\c=0$.) The main ingredients of the proof 
 are entropy and duality methods.

In the continuation of \cite{DesTres}, the present paper deals with weak forms of solutions and exploits entropy and duality methods. These methods give rise to $L^p$ estimates for the solution which are quite explicit. Furthermore, considering weak forms of solutions allows us to consider initial data of low regularity (in comparison with most of previous works which rely on Amann's theory). In comparison with \cite{DesTres}, our Theorem includes the cases with self-diffusion terms ($d_\a>0$ and/or $d_\c>0$).  Another improvement is the possibility, for the first time, to consider cross-diffusion terms $v^\b$ which are quite singular functions of $v$ in zero, since we remove the assumption $\b\ge1$ and replace it by the assumption $\b>0$.

Finally, we refer to \cite{DLMT} for systems of the form \eqref{sku1}--\eqref{sku3} presenting in addition a cross-diffusion term in the second equation (non-triangular case).

\subsection{Notations}\label{subsec:notations}
When $T>0$ is fixed, we write $\L^p=\L^p(]0,T[\times\O)$ for any $p\in[1,\infty]$. For $p\in[1,\infty[$, we denote by $\L^{p+}=\L^{p+}(]0,T[\times\O)$ the union of all spaces $\L^{q}(]0,T[\times\O)$ for $q>p$. We write $\H^1_m(\O)$ the space of functions of $\H^1(\O)$ with zero-mean value on $\O$, and we write $\H_m^{-1}(\O)$ its dual space.

We recall that $\D$ is the set of parameters, defined in \eqref{cdt:param}. In the sequel, $C(...)$ always denotes a positive constant, which only depends on its explicitly written arguments and may change from line to line. For example, $C(\O,\D)$ is a positive constant that only depends on $\O$ and $\D$. Furthermore, when it depends on $\D$, the constant $C(...,\D)$ can be chosen continuous in $\D$ on the set $\{\D\in (\R_+^\ast)^{15}\times\R_+\times\R_+^\ast\times\R_+: a\le 1+\alpha, \, d\le 2+\alpha\}$.

\subsection{Plan of the paper}

We will first prove our result of existence under the following condition, which is slightly more restrictive than \eqref{cdt:param},
\begin{equation}\label{cdt:param2}
\text{$\a\ge0$, $d<2+\a$,  $a<1+\a$}.
\end{equation}
More precisely, we will assume condition \eqref{cdt:param2} in Section 
\ref{sec:existence}.\\

In Section \ref{sec:apriori}, we perform formal computations to establish \emph{a priori} estimates 
on the solutions of system \eqref{sku1}--\eqref{sku4}. In Section \ref{sec:approximating}, we define a semi-discrete (in time) scheme designed to be a smooth approximation of system \eqref{sku1}--\eqref{sku4}, and we prove (rigorously) estimates on the solution of this scheme that are independent of the time step. We use these estimates in Section \ref{sec:existence} to pass to the limit in the approximating scheme under condition \eqref{cdt:param2}. This proves the existence of solution and the estimates required in i) when condition \eqref{cdt:param2} is satisfied. Finally, we come to the end of the proof of Theorem \ref{theo_ex_csd} in Section \ref{sec:special}.

\section{\emph{A priori} estimates}\label{sec:apriori}

This section only contains formal computations. They will not be used in the proof of Theorem \ref{theo_ex_csd}, but we believe that these computations are useful to clarify the main tools at the origin of our result of existence. Similar computations will be performed rigorously in Section \ref{sec:approximating} at the level of an approximating scheme.\\

Therefore, suppose that $(u,v)$ is a classical solution of system \eqref{sku1}--\eqref{sku4} for some smooth positive-valued initial data $u_{in}=u_{in}(x)>0$ and $v_{in}=v_{in}(x)>0$, and such that $u$ and $v$ are positive-valued and sufficiently smooth to perform all following computations (for example, in $C^2_c(\R_+\times \overline{\O})$). Fix $T>0$. Assume condition \eqref{cdt:param}. In the following Sections \ref{subsec:basic}--\ref{subsec:apriori_ent}, we compute estimates for the solution $(u,v)$ on $[0,T]\times\O$.

\subsection{Basic estimates}\label{subsec:basic}
\subsubsection{Maximum principle for $v$}
A direct application of the maximum principle to equation \eqref{sku2} gives
\begin{equation}\label{es:Linfty}
\fbox{$\sup_{[0,T]} v(t) \le \max\{ \sup_\Omega v_{in}, \left(\frac{r_v}{r_c}\right)^{1/c} \}.$}
\end{equation}

\subsubsection{Mass conservation for $u$}

We integrate the equation for $u$ on $[0,T]\times\O$:
\begin{equation*}
\int_\O u(T) = \int_\O u_{in} + \int_0^T\int_\O u\, (r_u - r_{a}\, u^a - r_{b}\,v^b).
\end{equation*}
Since (for all $u\ge0$) $r_u u \le C(a,r_u,r_a) + \frac{r_a}{2}u^{1+a}$, we have

\begin{equation}\label{es:L1}
\fbox{$\int_\O u(T) + \frac{r_{a}}{2} \int_0^T\int_\O u^{1+a} \le \int_\O u_{in} + |\O|\,T\, C(a,r_u,r_a).$}
\end{equation}

\subsection{Duality estimate}

We now present an \emph{a priori} estimate obtained thanks to a recent lemma relying on duality methods. This type of lemma was introduced in \cite{PiSc} and has since then showed to be very useful in the context of cross-diffusion (see \cite{BLMP}, \cite{BoudibaPierre}, \cite{DFPV}, \cite{DesLepMou}). We cite here a version coming from \cite{DLMT}.
\begin{lem}
Let $r_u>0$. Let $M:[0,T]\times\Omega\rightarrow\R_+$ be a positive continuous function lower bounded by a positive constant. Smooth nonnegative solutions of the differential inequality 
\begin{align*}
\partial_t u-\Delta (M u)\leq r_u u \text{ on }\Omega,\\
\partial_n (M u)=0,\text{ on }\partial\Omega,
\end{align*}
 satisfy the bound
\begin{align*}
\int_0^T\int_{\O} M u^2\leq\exp(2 r_u T)\times \left( C_\Omega^2\, \|u_{in}\|_{\H^{-1}_m(\Omega)}^2+\langle u_{in}\rangle^2\int_{\O_T} M\right),
\end{align*}
where $\langle u_{in} \rangle$ denote the mean value of $x\mapsto u(0,x)$ on $\Omega$ and $C_\Omega$ is the Poincar\' e-Wirtinger constant.
\end{lem}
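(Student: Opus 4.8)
The plan is to remove the reaction term by an exponential change of unknown and then run a duality argument against the solution of the backward (dual) problem. First I would set $w:=e^{-r_u t}u$. A direct computation turns the differential inequality $\partial_t u-\Delta(Mu)\le r_u u$ into the homogeneous inequality $\partial_t w-\Delta(Mw)\le 0$, with the same Neumann condition $\partial_n(Mw)=0$ and the same initial datum $w(0,\cdot)=u_{in}$. Since $\int_{\O_T}M u^2\le e^{2r_uT}\int_{\O_T}M w^2$, it suffices to bound $X:=\int_{\O_T}M w^2$ by $C_\Omega^2\,\norm{u_{in}}_{\H^{-1}_m(\O)}^2+\langle u_{in}\rangle^2\int_{\O_T}M$.

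For the duality step I would introduce the dual backward problem
\[ -\partial_t\phi-M\Delta\phi=\sqrt{M}\,H,\qquad \partial_n\phi=0 \text{ on } \partial\Omega,\qquad \phi(T,\cdot)=0, \]
with the specific nonnegative source $H:=\sqrt{M}\,w\ge 0$. Since $M$ is positive, continuous and bounded below, this is uniformly parabolic, and the maximum principle gives $\phi\ge 0$. Testing the inequality $\partial_t w-\Delta(Mw)\le 0$ against $\phi\ge 0$ and integrating by parts twice in space (the Neumann conditions on $w$ and $\phi$ kill every boundary term), then using $\phi(T,\cdot)=0$, yields
\[ \int_{\O_T}M w^2=\int_{\O_T}\sqrt{M}\,w\,H\le \int_\Omega u_{in}\,\phi(0,\cdot). \]
Thus everything reduces to estimating $\int_\Omega u_{in}\,\phi(0,\cdot)$. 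Note that the inequality (rather than equation) for $w$ enters only through the sign $\phi\ge 0$, which is why the positivity of the dual solution is essential.

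The key a priori bound on $\phi$ comes from multiplying the dual equation by $-\Delta\phi$, integrating over $\O_T$, and using $\nabla\phi(T,\cdot)=0$ together with Cauchy--Schwarz and Young's inequality:
\[ \tfrac12\int_\Omega|\nabla\phi(0,\cdot)|^2+\tfrac12\int_{\O_T}M(\Delta\phi)^2\le \tfrac12\int_{\O_T}H^2=\tfrac12\,X. \]
In particular $\norm{\nabla\phi(0,\cdot)}_{\L^2(\Omega)}^2\le X$ and $\int_{\O_T}M(\Delta\phi)^2\le X$. Finally I would split $u_{in}=\langle u_{in}\rangle+(u_{in}-\langle u_{in}\rangle)$. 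For the mean-free part, since a functional in $\H^{-1}_m$ only sees the zero-mean component, the duality $\H^{-1}_m$--$\H^1_m$ and the Poincar\'e--Wirtinger inequality give $\int_\Omega(u_{in}-\langle u_{in}\rangle)\phi(0,\cdot)\le C_\Omega\,\norm{u_{in}}_{\H^{-1}_m(\O)}\,\norm{\nabla\phi(0,\cdot)}_{\L^2(\Omega)}\le C_\Omega\,\norm{u_{in}}_{\H^{-1}_m(\O)}\sqrt{X}$. For the constant mode, I would write $\int_\Omega\phi(0,\cdot)=\int_{\O_T}(-\partial_t\phi)=\int_{\O_T}\big(\sqrt{M}\,H+M\Delta\phi\big)$ and bound each term by Cauchy--Schwarz against $M$, giving $\int_\Omega\phi(0,\cdot)\le 2\big(\int_{\O_T}M\big)^{1/2}\sqrt{X}$, hence $\langle u_{in}\rangle\int_\Omega\phi(0,\cdot)\le 2\langle u_{in}\rangle\big(\int_{\O_T}M\big)^{1/2}\sqrt{X}$.

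Plugging both contributions into the displayed bound for $X$ and using Young's inequality to absorb the common factor $\sqrt{X}$ on the right-hand side closes the estimate and produces the claimed inequality. I expect the main obstacle to be the construction, positivity, and energy estimate for the dual solution $\phi$, together with the \emph{separate} treatment of the constant mode of $u_{in}$: this is precisely what generates the term $\langle u_{in}\rangle^2\int_{\O_T}M$ and does not follow from the $\H^{-1}$ duality alone. Tracking the sharp values of the constants (the precise role of $C_\Omega$) is routine bookkeeping.
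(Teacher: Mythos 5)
Note first that the paper never proves this lemma: it is quoted verbatim from \cite{DLMT}, so there is no in-paper proof to compare against line by line. The closest in-paper analogue is the proof of Lemma \ref{jeveuxunnom} in Section \ref{subsec:alpha0}, and your argument belongs to exactly that family (Pierre--Schmitt duality), indeed mirroring it step by step: your change of unknown $w=e^{-r_u t}u$ is exact and accounts precisely for the prefactor $e^{2r_u T}$; the nonnegativity of the backward dual solution $\phi$ (maximum principle with nonnegative source $Mw$) is correctly identified as the mechanism that lets you test a differential \emph{inequality}; the two spatial integrations by parts are legitimately closed by the two Neumann conditions $\pa_n(Mw)=0$ and $\pa_n\phi=0$; the energy estimate from multiplying the dual equation by $-\Delta_x\phi$ is correct (it gives $\|\nabla_x\phi(0)\|_{\L^2(\O)}^2+\int_{\O_T}M(\Delta_x\phi)^2\le\int_{\O_T}Mw^2$); and your treatment of the constant mode, writing $\int_\O\phi(0,\cdot)=\int_{\O_T}(-\pa_t\phi)$ and substituting the dual equation, is the same device as in the third step of the proof of Lemma \ref{jeveuxunnom}. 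One point of rigor you correctly flag but should not understate: $M$ is only continuous, so the dual problem is a non-divergence parabolic equation with non-smooth coefficients, and the construction, positivity and $\L^2$-regularity of $\phi$ require either $\W^{2,1}_q$ theory for continuous coefficients or a mollification of $M$ as in the first step of the proof of Lemma \ref{jeveuxunnom}; since the lemma concerns smooth solutions and is used purely formally in Section \ref{sec:apriori}, this is acceptable at this level.

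The one genuine discrepancy is quantitative: your chain does not produce the inequality as stated. From your own displays you get $X\le C_\Omega\,\|u_{in}\|_{\H^{-1}_m(\O)}\sqrt{X}+2\,\langle u_{in}\rangle\left(\int_{\O_T}M\right)^{1/2}\sqrt{X}$, hence
\begin{equation*}
\int_{\O_T}Mw^2 \;\le\; \left( C_\Omega\,\|u_{in}\|_{\H^{-1}_m(\O)} + 2\,\langle u_{in}\rangle\left(\int_{\O_T}M\right)^{1/2} \right)^{2},
\end{equation*}
and the cross term cannot be absorbed: after Cauchy--Schwarz this is the stated bound with $(C_\Omega^2,\,1)$ degraded to roughly $(2C_\Omega^2,\,8)$. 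So calling the constants ``routine bookkeeping'' is too optimistic for this particular splitting --- the crude bound $\int_\O\phi(0,\cdot)\le 2(\int_{\O_T}M)^{1/2}\sqrt{X}$ loses a factor $2$, and the final squaring loses another. You prove the lemma up to universal constant factors, which is entirely sufficient for the way it is used in this paper (it only feeds the qualitative estimate \eqref{es:dual}), but it is strictly weaker than the displayed inequality of \cite{DLMT}; recovering the literal constants $C_\Omega^2$ and $1$ would require a sharper argument (and in any case the precise appearance of $C_\Omega$ depends on the norm convention chosen on $\H^1_m(\O)$, which neither your proof nor the paper's Section \ref{subsec:notations} pins down).
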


We check that $M:=d_u + d_\a u^\a + d_\b v^\b$ is lower bounded by $d_u>0$. We therefore can apply the lemma to equation \eqref{sku1} to get

\begin{equation*}
\int_0^T\int_{\O} [d_u + d_\a u^\a + d_\b v^\b] u^2\leq C(\Omega,T, u_{in},r_u) \times \left( 1+\int_0^T\int_{\O} [d_u + d_\a u^\a + d_\b v^\b]\right).
\end{equation*}
In particular, since all terms in the LHS are nonnegative,
\begin{eqnarray*}
\int_0^T\int_{\O} d_\a u^{\a+2}&\leq C(\Omega,T, u_{in},r_u) \times \left( 1+\int_0^T\int_{\O} [d_u + d_\a u^\a + d_\b v^\b]\right)\\
&\leq C(\Omega,T, u_{in},v_{in},\D)\times \left( 1+\int_0^T\int_{\O} [1 + d_\a u^\a]\right),
\end{eqnarray*}
where we used the $L^\infty$ bound \eqref{es:Linfty} in the last line. As a consequence, using furthermore the inequality (for $\epsilon>0$ small enough, and for all $z\ge0$) $z^\a \le C_\epsilon + \epsilon z^{2+\a}$, we have
\begin{equation}\label{es:dual}
\fbox{$\int_0^T\int_{\O} u^{2+\a}\leq C(\Omega,T, u_{in},v_{in},\D).$}
\end{equation}

\subsection{Entropy estimates}\label{subsec:apriori_ent}

We now present two new estimates which are crucial to obtain weak compactness on the solutions (as they yield a bound for the gradients of the solutions). These estimates are obtained thanks to the introduction of two functionals whose evolution along the flow of the solutions can be controlled. When decreasing, such functionals are called Lyapunov functionals and sometimes "entropies". By a small abuse, we will refer to the resulting estimates as "entropy estimates".

\subsubsection{Entropy estimate for $v$}

For any $p\neq1$, define
$$ E_v(t) := \int_\O \frac{v^p}{p} (t).$$

We compute the derivative

\begin{eqnarray*}
\ud_t E_v (t) = \int_\O \pa_t v \, v^{p-1} (t)\\
= \int_\O v^p (r_v - r_{c}\,v^c - r_d\, u^d) + \int_\O v^{p-1} \Delta_x [(d_v + d_\c v^\c ) \, v],
\end{eqnarray*}
where the last term writes
\begin{eqnarray*}
\int_\O v^{p-1} \Delta_x [(d_v + d_\c v^\c ) \, v] = - \int_\O \nabla_x [v^{p-1}]\cdot \nabla_x [(d_v + d_\c v^\c ) \, v]\\
= - 4 \frac{p-1}{p^2}\int_\O |\nabla_x v^{p/2}|^2 \,[(d_v +d_\c (\c+1)\, v^\c )].
\end{eqnarray*}

We integrate on $t\in[0,T]$:
\begin{eqnarray*}
\int_\O \frac{v^p}{p}(T) \; + \; 4 \frac{p-1}{p^2} \int_0^T \int_\O \left|\nabla_x [v^{p/2}]\right|^2 [(d_v + d_\c(\c+1)\, v^\c )] \\
= \int_\O \frac{v^p}{p}(0) \;+\; \int_0^T\int_\O v^p (r_v - r_{c}\, v^c - r_{d}\,u^d).
\end{eqnarray*}

Now taking $0<p<1$, we get
\begin{equation*}
\int_0^T \int_\O \left|\nabla_x [v^{p/2}]\right|^2
\le C(p,\D) \left( \int_\O v^p(T) \; + \int_0^T\int_\O v^p ( v^c + u^d) \right)
\end{equation*}
and using the $L^\infty$ bound \eqref{es:Linfty} and the dual estimate \eqref{es:dual} with $d\le 2+\alpha$ (given by condition \eqref{cdt:param}),
\begin{eqnarray}\label{es:p_ent}
\int_0^T \int_\O \left|\nabla_x [v^{p/2}]\right|^2
\le C(p,\O,T,u_{in},v_{in},\D) \qquad (\text{for all } 0<p<1).
\end{eqnarray}
Now, let $q\ge 1$. Let us pick some $p\in]0,1[$. We can write $\nabla_x v^{q/2}= (q/p)\,v^{(q-p)/2} \times \nabla_x v^{p/2}\in L^\infty \times L^2$ by \eqref{es:Linfty} and \eqref{es:p_ent}. Therefore
\begin{eqnarray}\label{es:p_ent+}
\fbox{$\int_0^T \int_\O \left|\nabla_x [v^{p/2}]\right|^2
\le C(p,\O,T,u_{in},v_{in},\D) \qquad ($for all $0<p<\infty).$}
\end{eqnarray}

\subsubsection{Entropy estimate for $u$}

Define
$$ E_u(t) := \int_\O \log (1+u) (t).$$

We compute the derivative

\begin{eqnarray*}
\ud_t E_u (t) = \int_\O \frac{\pa_t u}{1+u} (t)\\
= \int_\O \frac{u}{1+u} (r_u - r_{a}\, u^a - r_{b}\,v^b) + \int_\O \frac{1}{1+u} \Delta_x [(d_u + d_\a u^\a + d_\b v^\b ) \, u],
\end{eqnarray*}
where the last term writes
\begin{eqnarray*}
\int_\O \frac{1}{1+u} \Delta_x [(d_u + d_\a u^\a + d_\b v^\b ) \, u] = - \int_\O \nabla_x [\frac{1}{1+u}] \cdot \nabla_x [(d_u + d_\a u^\a + d_\b v^\b ) \, u]\\
= \int_\O \left|\nabla_x [\log(1+u)]\right|^2 (d_u + d_\a(1+\a) u^\a + d_\b v^\b ) + \int_\O d_\b \frac{u}{1+u} \nabla_x [\log(1+u)] \cdot \nabla_x v^\b.
\end{eqnarray*}

We integrate on $t\in[0,T]$:
\begin{equation}\label{comp:ent_u1}\begin{split}
\int_\O \log (1+u) (0) \; + \; \int_0^T \int_\O \left|\nabla_x [\log(1+u)]\right|^2 (d_u + d_\a(1+\a) u^\a + d_\b v^\b ) \\
= \int_\O \log (1+u) (T) \;-\; \int_0^T\int_\O \frac{u}{1+u} (r_u - r_{a}\, u^a - r_{b}\,v^b) \;-\;\int_0^T\int_\O d_\b \frac{u}{1+u} \nabla_x [\log(1+u)] \cdot \nabla_x v^\b .
\end{split}\end{equation}

Using the elementary inequality $2xy\le x^2+y^2$,
\begin{eqnarray*}
\left|\int_0^T\int_\O \frac{u}{1+u} \nabla_x [\log(1+u)] \cdot \nabla_x v^\b \right|
= 2 \left|\int_0^T\int_\O \frac{u}{1+u} \nabla_x [\log(1+u)] \, v^{\b/2} \cdot \nabla_x v^{\b/2} \right| \\
\le \frac{1}{2}\int_0^T \int_\O \left|\nabla_x [\log(1+u)]\right|^2\, v^\b +  2 \int_0^T\int_\O \left(\frac{u}{1+u}\right)^2 \left|\nabla_x v^{\b/2}\right|^2.
\end{eqnarray*}

Reinserting in \eqref{comp:ent_u1} (and using $u/(1+u)\le 1$), we get
\begin{eqnarray*}
\int_\O \log (1+u) (0) \; + \; \int_0^T \int_\O \left|\nabla_x [\log(1+u)]\right|^2 (d_u + d_\a(1+\a) u^\a + \frac{d_\b}{2} v^\b ) \\
\le \int_\O \log (1+u) (T) \;+\; \int_0^T\int_\O (r_u + r_{a}\, u^a + r_{b}\,v^b) \;+\;2 d_\b \int_0^T\int_\O \left|\nabla_x v^{\b/2}\right|^2 .
\end{eqnarray*}

 In the RHS, the first integral and the second term of the second integral are controlled thanks to the $L^1$ estimate \eqref{es:L1} and the last term of the second integral is controlled thanks to the $L^\infty$ bound \eqref{es:Linfty} :
\begin{eqnarray*}
\int_\O \log (1+u) (0) \; + \; \int_0^T \int_\O \left|\nabla_x [\log(1+u)]\right|^2 (d_u + d_\a(1+\a) u^\a + \frac{d_\b}{2} v^\b ) \\
\le C(\O,T,u_{in},a,b,r_u,r_a,r_b) \;+\;2 d_\b \int_0^T\int_\O \left|\nabla_x v^{\b/2}\right|^2 .
\end{eqnarray*}
Finally, the last term is controlled thanks to \eqref{es:p_ent+}, so that
\begin{eqnarray}\label{es:ent_u}
\fbox{$\int_0^T \int_\O \left|\nabla_x [\log(1+u)]\right|^2 (1 + u^\a )
\le C(\O,T,u_{in},\D).$}
\end{eqnarray}

\section{Approximating scheme}\label{sec:approximating}
In this section we define a semi-discrete (in time) scheme intended to approximate system \eqref{sku1}--\eqref{sku4}, and establish rigorously uniform estimates (w.r.t the time step) for the solution of this scheme. Thanks to these uniform estimates, we will be able to pass to the limit in the approximating scheme in the following Section (under condition \eqref{cdt:param2}).

\subsection{Definition of the scheme}
The scheme takes the following form: $(u_0,v_0)$ are given and for $1\leq k\leq N$ ($N=T/\tau$, $T>0$)
\begin{eqnarray}
\label{eq:u_scheme}\frac{u_k-u_{k-1}}{\tau} -\Delta_x [(d_u + d_\a u_k^\a + d_\b v_k^\b)u_k] &=& u_k\, (r_u - r_{a}\, u_k^a - r_{b}\,v_k^b),\text{ on }\Omega,\\
\label{eq:v_scheme}\frac{v_k-v_{k-1}}{\tau} -\Delta_x [(d_v + d_\c v_k^\c)v_k] &=& v_k\, (r_v - r_{c}\,v_k^c - r_d\, u_k^d),\text{ on }\Omega,\\
\label{eq:bc_scheme}\pa_n u_k =\pa_n v_k&=& 0,\text{ on }\pa\Omega.\end{eqnarray}

This scheme was introduced in \cite{DLMT} in a more general setting. More precisely, in \cite{DLMT} systems of the following form are studied :
\begin{equation}\label{eq:scheme_ab}\begin{split}
\frac{U_k-U_{k-1}}{\tau} -\Delta_x [A(U_k)] &= R(U_k),\text{ on }\Omega,\\
\pa_n A(U_k) &= 0,\text{ on }\pa\Omega,
\end{split}\end{equation}
where
\begin{equation}\label{def:A_R}
A:\begin{matrix}
&\R^I_+&\rightarrow& \R_+^I\\
&U =\begin{pmatrix}
u_1 \\ \vdots \\ u_I
\end{pmatrix}
&\mapsto& 
A(U) =
\begin{pmatrix}
a_1(U) u_1 \\ \vdots \\ a_I(U) u_I
\end{pmatrix}
\end{matrix}
\quad \text{and} \quad
R:\begin{matrix}
&\R^I_+&\rightarrow& \R^I\\
&U =\begin{pmatrix}
u_1 \\ \vdots \\ u_I
\end{pmatrix}
&\mapsto& 
R(U)=
\begin{pmatrix}
r_1(U) u_1 \\ \vdots \\ r_I(U) u_I
\end{pmatrix}
\end{matrix}
\end{equation}
satisfy the following assumptions:
\begin{itemize}
\item[\textbf{H1}] For all $i$, the functions $a_i$ and $r_i$ are continuous from $\R_+^I$ to $\R$.
\item[\textbf{H2}] For all $i$, $a_i$ is lower bounded by some positive constant $\underline{a}>0$, and $r_i$ is upper bounded by a positive constant $\overline{r}>0$. That is $a_i(U) \ge \underline{a} > 0$, 
$r_i(U)\le \overline{r}$ for all $U\in \R_+^I$.
\item[\textbf{H3}] $A$ is a homeomorphism from $\R_+^I$ to itself. 
\end{itemize}

Following \cite{DLMT}, we introduce the
\begin{definition}[Strong solution]\label{def:strongsol}
Assume \textbf{H1}, \textbf{H2}, \textbf{H3}. Let $\tau>0$ and let $U_{k-1}$ be a nonnegative vector-valued function in $\L^\infty(\Omega)^I$. A nonnegative vector-valued function $U_k$ is a strong solution of \eqref{eq:scheme_ab} if $U_k$ lies in $\L^\infty(\Omega)^I$, $A(U_k)$ lies in $\H^2(\Omega)^I$ and \eqref{eq:scheme_ab} is satisfied almost everywhere on $\Omega$, resp. $\pa\O$.
\end{definition}

The general theorem from \cite{DLMT} writes
\begin{Theo}\label{th:existence_scheme_it}
Assume \textbf{H1}, \textbf{H2}, \textbf{H3}. Let $\Omega$ be a bounded open set of $\R^m$ with smooth boundary. Fix $T>0$ and an integer $N$ large enough such that $\overline{r}\tau<1/2$, where $\tau:=T/N$ and $\overline{r}$ is the positive number defined in \textnormal{\textbf{H2}}. Fix $\eta>0$ and a vector-valued function $\L^\infty(\Omega)^I\ni U_0\geq \eta$. Then there exists a sequence of positive vector-valued functions $(U_k)_{1\leq k\leq N}$ in $\L^\infty(\Omega)^I$ which solve \eqref{eq:scheme_ab} (in the sense of Definition \ref{def:strongsol}). Furthermore, it satisfies the following estimates :
for all $k\geq 1$ and $p\in[1,\infty[$,
\begin{align}\label{es:depend_on_tau1_ab}
U_k &\in\mathscr{C}^0(\overline{\Omega})^I, \\
U_k &\geq \eta_{A,R,\tau}  \text{ on }\overline{\Omega},\\
\label{es:depend_on_tau3_ab} A(U_k) &\in \W^{2,p}(\Omega)^I,
\end{align}
where $\eta_{A,R,\tau}>0$ is a positive constant depending on the maps $A$ and $R$ and $\tau$, and
\begin{align}
\label{discr_es:dual_ab}
\sum_{k=1}^{N} \tau \int_\Omega \left(\sum_{i=1}^I U_{k,i}\right)\left(\sum_{i=1}^I A_i(U_k)\right) &\leq  C(\Omega,U_0,A, \overline{r}, N\tau),
\end{align}
where $C(\Omega,U_0,A, \overline{r}, N\tau)$ is a positive constant depending only on $\Omega$, $A$, $\overline{r}$, $N\tau$ and $\|U_0\|_{\L^1\cap\H^{-1}_m(\Omega)}$.
\end{Theo}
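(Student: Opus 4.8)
The plan is to solve the scheme \eqref{eq:scheme_ab} inductively in $k$: assuming $U_{k-1}\in\L^\infty(\O)^I$ is nonnegative and bounded below by a positive constant, I construct $U_k$ as the solution of the corresponding elliptic system, and the global-in-$k$ bound \eqref{discr_es:dual_ab} is then obtained a posteriori. The first step is to reformulate the $k$-th equation as a semilinear elliptic system with \emph{decoupled} principal part. Since by \textbf{H3} the map $A$ is a homeomorphism of $\R_+^I$, I set $W:=A(U_k)$ and $B:=A^{-1}$, so that \eqref{eq:scheme_ab} becomes
\[
-\Delta_x W = \tfrac{1}{\tau}\bigl(U_{k-1}-B(W)\bigr) + R(B(W)), \qquad \pa_n W = 0 \text{ on } \pa\O,
\]
where the second-order operator is now the diagonal Neumann Laplacian and all coupling sits in the continuous zeroth-order term. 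I would solve this by a Schauder/Leray--Schauder fixed point for the map $\widetilde W \mapsto W$, where $W$ solves the linear Neumann problem $(I-\tau\Delta_N)W = \widetilde W + U_{k-1} - B(\widetilde W) + \tau R(B(\widetilde W))$ and $(I-\tau\Delta_N)^{-1}$ is the compact, positivity-preserving, $\L^2\to\W^{2,p}$ resolvent of the Neumann Laplacian. Compactness (Rellich) together with continuity of $B$ and $R$ (by \textbf{H1}, \textbf{H3}) makes the map compact and continuous, and a fixed point satisfies exactly the displayed equation. Since $B$ and $R$ may grow superlinearly, I would first truncate them to obtain an invariant ball, solve the truncated problem, and remove the truncation using the uniform bounds of the next step.

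The uniform bounds come from the maximum principle together with the structural hypotheses. Summing the $I$ equations, writing $S_k:=\sum_i U_{k,i}$ and $\Sigma_k:=\sum_i A_i(U_k)$, hypothesis \textbf{H2} ($r_i\le\overline{r}$) yields
\[
\frac{S_k-S_{k-1}}{\tau} - \Delta_x \Sigma_k \le \overline{r}\,S_k, \qquad \pa_n\Sigma_k = 0.
\]
Evaluating at an interior maximum of $\Sigma_k$ (and using the Neumann condition at the boundary) bounds $S_k$ there in terms of $\|S_{k-1}\|_\infty$ and $1-\tau\overline{r}>1/2$; since $A$ is continuous and, being a homeomorphism of $\R_+^I$, proper, this bounds $\|\Sigma_k\|_\infty$ and hence $\|U_k\|_\infty=\|B(W)\|_\infty$. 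On the resulting compact range of $U_k$ the continuous coefficients $r_i$ are also bounded below, and a symmetric argument at an interior minimum of each $A_i(U_k)$ — where $\Delta_x A_i(U_k)\ge 0$ and $U_{k-1,i}\ge\eta_{k-1}>0$ — gives the strictly positive lower bound $\eta_{A,R,\tau}$, which remains positive over the finitely many steps $k\le N$ (this is where $\overline{r}\tau<1/2$ and $N<\infty$ are used). With $\|U_k\|_\infty$ in hand the right-hand side of the $W$-equation lies in $\L^\infty$, so elliptic regularity gives $A(U_k)=W\in\W^{2,p}(\O)^I$ for every $p<\infty$ and, by Sobolev embedding and continuity of $B$, $U_k\in\mathscr{C}^0(\overline\O)^I$; this establishes \eqref{es:depend_on_tau1_ab}--\eqref{es:depend_on_tau3_ab}.

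It remains to prove the duality estimate \eqref{discr_es:dual_ab}, the discrete analogue of the continuous duality lemma recalled above (in the spirit of \cite{PiSc}, \cite{DLMT}). Writing $\Sigma_k=M_k S_k$ with $M_k:=\Sigma_k/S_k\ge\underline{a}$ (legitimate since $S_k>0$ by positivity), the summed inequality has exactly the scalar form treated in that lemma, with $\pa_t$ replaced by a backward difference quotient. I would run the duality argument discretely: introduce the backward dual scheme $-(w_{k+1}-w_k)/\tau - M_k\Delta_x w_k = \text{(source)}$ with $w_{N+1}=0$ and Neumann conditions, each level solvable by the invertibility of $I-\tau\Delta_N$, and replace the integration by parts in time of the continuous proof by Abel summation. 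Testing the summed inequality against the dual solution, and handling the mean-zero part through the Poincar\'e--Wirtinger inequality (hence through $\|U_0\|_{\H^{-1}_m}$) and the mean part through mass control (hence through $\|U_0\|_{\L^1}$), produces $\sum_k\tau\int_\O S_k\Sigma_k \le C(\O,A,\overline{r},N\tau,\|U_0\|_{\L^1\cap\H^{-1}_m})$, which is \eqref{discr_es:dual_ab} since $S_k\Sigma_k=(\sum_i U_{k,i})(\sum_i A_i(U_k))$.

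The hardest part is this last estimate: controlling the discrete backward dual problem \emph{uniformly in the time step} $\tau$, and correctly tracking how the mean and mean-zero parts of the data enter, so that the final constant depends only on $N\tau=T$ and on $\|U_0\|_{\L^1\cap\H^{-1}_m}$ rather than on $\tau$ or on higher norms of $U_0$. The fixed-point existence, by contrast, is essentially routine once the maximum-principle bounds and the properness of the homeomorphism $A$ are in place.
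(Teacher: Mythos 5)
First, a point of context: this paper does not prove Theorem \ref{th:existence_scheme_it} at all --- the statement is quoted verbatim from \cite{DLMT} (``The general theorem from \cite{DLMT} writes''), and the only argument supplied here is the verification of \textbf{H1}--\textbf{H3} in the proof of Corollary \ref{cor:existence_scheme_it}. So your proposal can only be measured against the proof in \cite{DLMT}, and its architecture matches that proof: transform by $W=A(U_k)$ so that the principal part becomes the decoupled Neumann Laplacian, solve by a compactness fixed point, obtain $\L^\infty$ bounds from the maximum principle applied to $\sum_i W_i$, positivity from the minimum principle (here it helps to note that $A_i(U)=0$ iff $U_i=0$, so that clipping negative values of $W$ is compatible with the a posteriori positivity argument), and the duality estimate by a discrete Pierre-type argument with Abel summation replacing the integration by parts in time. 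Your truncation-removal step does close, provided you truncate by composing $B$ with the projection onto $[0,K]^I$ and choose $K$ strictly larger than $\rho':=\sup\{\sum_i A_i(U):\ U\in\R_+^I,\ \sum_i U_i\le 2\|S_{k-1}\|_\infty\}$ (finite by continuity of $A$): at a maximum point of $\sum_i W_i$ the equation bounds $\sum_i B_K(W)_i$ there by $2\|S_{k-1}\|_\infty$, and properness of $A$ upgrades this to $\sum_i W_i\le\rho'<K$, so the truncation is inactive at the maximum and hence everywhere. Note also that $\overline{r}\tau<1/2$ is used at least as much to keep $(1-\tau\overline{r})^{-1}\le 2$ in these maximum-principle steps and in the discrete Gronwall as in the positivity iteration to which you attribute it.

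The one place where your sketch would genuinely get stuck is inside the duality estimate, precisely in the step you summarize as ``the mean part through mass control''. Testing the summed inequality with $(-\Delta)^{-1}(S_k-\langle S_k\rangle)$ (or, in your backward-dual-scheme formulation, controlling the spatial mean of the dual variable, which couples to $\int_\O M_k$) produces, besides the telescoping $\H^{-1}_m$ terms and the $\overline{r}$-Gronwall term, the cross term $\sum_k\tau\,\langle S_k\rangle\int_\O\Sigma_k$. Mass control does bound $\langle S_k\rangle$, but nothing bounds $\int_\O\Sigma_k$ a priori: \textbf{H2} gives only a \emph{lower} bound on the $a_i$, and no upper bound on $M_k=\Sigma_k/S_k$ is available uniformly in $\tau$ (this is exactly why the continuous lemma recalled in Section 2 keeps $\int_{\O_T}M$ on its right-hand side instead of absorbing it). The missing idea is an absorption using continuity of $A$ on compact sets: with $\mu:=\max_k\langle S_k\rangle$ controlled by the mass growth, split
\begin{equation*}
\int_\O\Sigma_k\ \le\ \delta^{-1}\int_\O\Sigma_k S_k\ +\ |\O|\,\sup\Big\{\sum_i A_i(U):\ U\in\R_+^I,\ \sum_i U_i\le\delta\Big\},
\end{equation*}
and choose $\delta=2\mu$, so that $\sum_k\tau\langle S_k\rangle\int_\O\Sigma_k\le\frac12\sum_k\tau\int_\O\Sigma_kS_k+C(A,\mu,|\O|,N\tau)$, the first half being absorbed by the left-hand side of \eqref{discr_es:dual_ab}. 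This is where the otherwise unexplained dependence of the constant in \eqref{discr_es:dual_ab} on the whole map $A$ --- and on $\|U_0\|_{\L^1}$ in addition to $\|U_0\|_{\H^{-1}_m}$ --- enters; without some such step the estimate does not close for a general homeomorphism $A$, even though you correctly identified this uniform-in-$\tau$ dual control as the hardest part.
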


Let us go back to system \eqref{eq:u_scheme}--\eqref{eq:bc_scheme}. It can be written in the form \eqref{eq:scheme_ab}--\eqref{def:A_R} with $I=2$ and
\begin{equation}\label{def:a_r}\begin{array}{rr}
 a_1(u,v) = d_u + d_\a u^\a + d_\b v^\b,& \qquad  r_1(u,v) = r_u - r_{a}\, u^a - r_{b}\,v^b, \\
a_2(u,v)= d_v + d_\c v^\c, &\qquad r_2(u,v) = r_v - r_{c}\,v^c - r_d\, u^d.
\end{array}\end{equation}

Applying Theorem \ref{th:existence_scheme_it} directly gives rise to the following existence theorem
\begin{cor}\label{cor:existence_scheme_it}
Let $\Omega$ be a bounded open set of $\R^m$ with smooth boundary. Let the parameters of system \eqref{eq:u_scheme}--\eqref{eq:v_scheme} $\D \in (\R_+^\ast)^{15}\times\R_+\times\R_+^\ast\times\R_+$. Fix $T>0$ and an integer $N$ large enough such that $\max(r_u,r_v)\tau<1/2$, where $\tau:=T/N$. Fix $\eta>0$ and a couple of functions $\L^\infty(\Omega)^2\ni (u_0,v_0)\geq \eta$. Then there exists a sequence of couples of positive functions $(u_k,v_k)_{1\leq k\leq N}$ in $\L^\infty(\Omega)^2$ which solve \eqref{eq:u_scheme}--\eqref{eq:bc_scheme} (in the sense of Definition \ref{def:strongsol}). Furthermore, it satisfies the following estimates :
for all $k\geq 1$ and $p\in[1,\infty[$,
\begin{align}\label{es:depend_on_tau1}
(u_k,v_k) &\in\mathscr{C}^0(\overline{\Omega})^2, \\
u_k \geq \eta_{\D,\tau},\; v_k &\geq \eta_{\D,\tau}  \text{ on }\overline{\Omega},\\
\label{es:depend_on_tau3} ([d_u + d_\a u_k^\a + d_\b v_k^\b,]u_k,\,[d_v + d_\c v_k^\c]v_k) &\in \W^{2,p}(\Omega)^2,
\end{align}
where $\eta_{\D,\tau}>0$ is a positive constant depending on $\D$ and $\tau$, and
\begin{align}
\label{discr_es:dual}
\sum_{k=1}^{N} \tau \int_\Omega (1+ u_k^\a)\,u_k^2  &\leq  C(\Omega,u_0,v_0,\D, N\tau),
\end{align}
where $C(\Omega,u_0,v_0,\D, N\tau)$ is a positive constant depending only on $\Omega$, $\D$, $N\tau$ and $\|(u_0,v_0)\|_{\L^1\cap\H^{-1}_m(\Omega)}$.
\end{cor}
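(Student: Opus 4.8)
The plan is to obtain Corollary \ref{cor:existence_scheme_it} as a direct specialization of Theorem \ref{th:existence_scheme_it} to the case $I=2$, with $(a_1,a_2,r_1,r_2)$ given by \eqref{def:a_r}, so the only genuine task is to check that hypotheses \textbf{H1}, \textbf{H2}, \textbf{H3} hold for this choice and then to rewrite the abstract estimate \eqref{discr_es:dual_ab} in the concrete form \eqref{discr_es:dual}.

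First I would dispose of \textbf{H1} and \textbf{H2}, which are immediate. Since $\a\ge0$, $\b>0$, $\c\ge0$, the maps $u\mapsto u^\a$, $v\mapsto v^\b$, $v\mapsto v^\c$ are continuous on $\R_+$, hence $a_1,a_2,r_1,r_2$ are continuous from $\R_+^2$ to $\R$, giving \textbf{H1}. For \textbf{H2}, nonnegativity of $d_\a,d_\b,d_\c,u,v$ yields $a_1(u,v)\ge d_u$ and $a_2(u,v)\ge d_v$, so one may take $\underline a=\min(d_u,d_v)>0$; likewise $r_1\le r_u$ and $r_2\le r_v$, so $\overline r=\max(r_u,r_v)$. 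This identification of $\overline r$ is exactly what dictates the admissibility condition $\max(r_u,r_v)\tau<1/2$ in the statement.

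The substantive point is \textbf{H3}: that
$$A(u,v)=\bigl((d_u+d_\a u^\a+d_\b v^\b)\,u,\ (d_v+d_\c v^\c)\,v\bigr)$$
is a homeomorphism of $\R_+^2$ onto itself. Here I would exploit the triangular structure. The second component $A_2(u,v)=\phi(v):=(d_v+d_\c v^\c)v$ depends on $v$ alone; since $\phi'(v)=d_v+d_\c(\c+1)v^\c>0$ on $\R_+^\ast$, $\phi(0)=0$ and $\phi(v)\to\infty$, the map $\phi:\R_+\to\R_+$ is an increasing homeomorphism. Given a target $(p,q)\in\R_+^2$, this determines $v=\phi^{-1}(q)$ uniquely and continuously. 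For that fixed $v$, the map $u\mapsto\psi_v(u):=(d_u+d_\a u^\a+d_\b v^\b)u$ satisfies $\psi_v'(u)=d_u+d_\a(\a+1)u^\a+d_\b v^\b>0$, $\psi_v(0)=0$ and $\psi_v(u)\to\infty$, hence is an increasing homeomorphism of $\R_+$, so $u=\psi_v^{-1}(p)$ is uniquely determined. This shows $A$ is a continuous bijection whose inverse $(p,q)\mapsto(\psi_{\phi^{-1}(q)}^{-1}(p),\phi^{-1}(q))$ is continuous, by joint continuity and strict $u$-monotonicity of $\psi_v$; thus \textbf{H3} holds. I expect this verification to be the main (indeed the only nonroutine) step.

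Finally, applying Theorem \ref{th:existence_scheme_it} yields \eqref{es:depend_on_tau1}, the uniform lower bound, and \eqref{es:depend_on_tau3} verbatim (with $A(U_k)$ read off from \eqref{def:a_r}), together with the dual bound \eqref{discr_es:dual_ab}. To reach \eqref{discr_es:dual}, I would expand, for $U_k=(u_k,v_k)$,
$$\Bigl(\sum_i U_{k,i}\Bigr)\Bigl(\sum_i A_i(U_k)\Bigr)=(u_k+v_k)\bigl[(d_u+d_\a u_k^\a+d_\b v_k^\b)u_k+(d_v+d_\c v_k^\c)v_k\bigr],$$
and discard all (nonnegative) cross terms, keeping only $d_u u_k^2+d_\a u_k^{2+\a}\ge\min(d_u,d_\a)\,(1+u_k^\a)u_k^2$. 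Since $d_u,d_\a>0$ by \eqref{cdt:param}, dividing by $\min(d_u,d_\a)$ converts \eqref{discr_es:dual_ab} into \eqref{discr_es:dual}, the constant depending only on $\O$, $\D$ (through $A$ and $\overline r=\max(r_u,r_v)$), $N\tau$ and $\|(u_0,v_0)\|_{\L^1\cap\H^{-1}_m(\O)}$, as claimed.
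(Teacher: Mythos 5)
Your proposal is correct and follows essentially the same route as the paper: verify \textbf{H1}--\textbf{H2} with $\underline{a}=\min(d_u,d_v)$, $\overline{r}=\max(r_u,r_v)$, establish \textbf{H3} by monotonicity, and read off \eqref{discr_es:dual} from \eqref{discr_es:dual_ab} by discarding the nonnegative cross terms (using $d_u,d_\a>0$). The only cosmetic difference is in \textbf{H3}: the paper deduces continuity of $A^{-1}$ from properness via the bound $|A(u,v)|\ge\min(d_u,d_v)\,|(u,v)|$, whereas you invert the triangular structure explicitly --- note that your appeal to ``joint continuity and strict $u$-monotonicity'' implicitly needs the same lower bound $\psi_v(u)\ge d_u u$ to keep preimages bounded, so the two arguments coincide in substance.
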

\begin{proof}[Proof of Corollary \ref{cor:existence_scheme_it}]
It suffices to check assumptions \textbf{H1}--\textbf{H3} for $A$ and $R$ defined by \eqref{def:a_r}. Assumptions \textbf{H1} and \textbf{H2} are clearly satisfied with $\underline{a}=\min(d_u,d_v)>0$ and $\overline{r}=\max(r_u,r_v)>0$. It remains to prove that the map $A$ is a homeomorphism from $\R_+^2$ to itself. By a monotonicity argument, it is easy to see that the map $A$ is a continuous bijection from $\R_+^2$ to itself. Since the map $A$ is furthermore proper (thanks to the inequality $|A(u,v)|\ge \min(d_u,d_v) |(u,v)|$ for all $u,v\ge0$), it is a homeomorphism.
\end{proof}

\subsection{Uniform estimates}
Let $\mu>0$ and let $U_0=(u_0,v_0)$ be a couple of positive functions in $\L^\infty(\O)^2$ such that $u_0, v_0\ge\mu>0$ a. e. on $\O$. Let $U_k=(u_k,v_k)$ be a solution of \eqref{eq:u_scheme}--\eqref{eq:bc_scheme} (in the sense of Definition \ref{def:strongsol}). We now establish uniform (w. r. t. $N$) estimates for $U_k$ that will allow us to pass in the limit in the approximating scheme (in Section \ref{sec:existence}). Note that some of these estimates (namely, \eqref{discr_es:pcpmax}, \eqref{discr_es:p_ent+}, \eqref{discr_es:energy_u}) can be seen as the "discretized" (in time) version of the \emph{a priori} estimates \eqref{es:Linfty}, \eqref{es:p_ent+}, \eqref{es:ent_u}, while \eqref{discr_es:dual} can be seen as the "discretized" (in time) version of the \emph{a priori} estimate \eqref{es:dual}. To establish rigorously these uniform estimates, we will use all the time the smoothness of $U_k=(u_k,v_k)$  (for any fixed $N$) specified by estimates \eqref{es:depend_on_tau1}--\eqref{es:depend_on_tau3}.

\subsubsection{Maximum principle}
\begin{lem}\label{le_discr:pcpmax} We have
\begin{eqnarray}\label{discr_es:pcpmax}
\max_{k=1..N} \sup_\Omega v_k \le \max\{ \sup_\Omega v_{0}, \left(\frac{r_v}{r_c}\right)^{1/c} \}.
\end{eqnarray}
\end{lem}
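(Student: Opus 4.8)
The plan is to prove \eqref{discr_es:pcpmax} by induction on $k$, via a test-function maximum principle adapted to the nonlinear self-diffusion. Write $M := \max\{\sup_\O v_0, (r_v/r_c)^{1/c}\}$ and introduce $w_k := (d_v + d_\c v_k^\c)\,v_k = g(v_k)$, where $g(s) := d_v s + d_\c s^{\c+1}$ is smooth and strictly increasing on $\R_+$ with $g'(s) = d_v + d_\c(\c+1)s^\c \ge d_v > 0$. By Corollary \ref{cor:existence_scheme_it} each $v_k$ is continuous and strictly positive on $\ov\O$, and $w_k \in \W^{2,p}(\O)$ for every $p$; this is exactly the regularity needed to justify the integrations by parts below ($w_k \in \H^2(\O)$, $(v_k - M)_+ \in \H^1(\O)$). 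The base case $k=0$ is immediate, since $v_0 \le \sup_\O v_0 \le M$.

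For the inductive step I assume $v_{k-1} \le M$ on $\O$, multiply \eqref{eq:v_scheme} by the nonnegative function $(v_k - M)_+ \in \H^1(\O)$, and integrate over $\O$. Integrating the diffusion term by parts, the boundary contribution vanishes because $\pa_n w_k = g'(v_k)\,\pa_n v_k = 0$ on $\pa\O$ (using $g'(v_k) > 0$ and \eqref{eq:bc_scheme}), and what remains is
\[
-\int_\O \Delta_x w_k\,(v_k - M)_+ = \int_{\{v_k > M\}} g'(v_k)\,\abs{\nabla_x v_k}^2 \;\ge\; 0.
\]
This is the single point at which the self-diffusion enters, and it is controlled precisely by the positivity of $g'$.

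The two remaining terms are handled by their signs on the super-level set $\{v_k > M\}$. There $v_k > M \ge (r_v/r_c)^{1/c}$ gives $r_v - r_c v_k^c < 0$, hence (since $r_d, u_k \ge 0$) the reaction rate $r_2(u_k, v_k) = r_v - r_c v_k^c - r_d u_k^d$ is negative, so $\int_\O v_k\, r_2\,(v_k - M)_+ \le 0$; while the induction hypothesis yields $v_k - v_{k-1} \ge v_k - M > 0$ there, so the discrete time term satisfies $\int_\O \frac{v_k - v_{k-1}}{\tau}(v_k - M)_+ \ge \frac{1}{\tau}\int_{\{v_k > M\}}(v_k - M)^2 \ge 0$. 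The tested equation thus reads (time term) $+$ (gradient term) $=$ (reaction term), with the left side nonnegative and the right side nonpositive; hence all three vanish. In particular $\int_{\{v_k > M\}}(v_k - M)^2 = 0$, so $(v_k - M)_+ = 0$ a.e., and by continuity $v_k \le M$ on $\ov\O$, closing the induction.

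The argument is in essence a routine discrete maximum principle, and I expect the only real obstacle to be the nonlinearity of the diffusion through $d_\c v_k^\c$, which forbids testing directly against $\Delta_x v_k$; this is exactly what the substitution $w_k = g(v_k)$ (equivalently the identity $\nabla_x w_k \cdot \nabla_x v_k = g'(v_k)\abs{\nabla_x v_k}^2 \ge 0$) circumvents. Secondary points to verify carefully are that the $\W^{2,p}$-regularity of $w_k$ suffices for the integration by parts and that the homogeneous Neumann condition transfers from $v_k$ to $w_k$.
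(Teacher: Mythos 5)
Your proof is correct and is essentially the paper's argument: the paper's proof consists exactly of the one-step bound $\sup_\Omega v_k \le \max\{\sup_\Omega v_{k-1}, (r_v/r_c)^{1/c}\}$, asserted ``by the maximum principle'' and then iterated over $k=1,\dots,N$, which is precisely your induction. The only difference is one of detail, not of method: where the paper invokes the maximum principle in one line, you implement it rigorously via the Stampacchia truncation $(v_k-M)_+$, correctly using the $\W^{2,p}$-regularity of $g(v_k)$ from Corollary \ref{cor:existence_scheme_it} and the positivity of $g'$ to handle the nonlinear diffusion.
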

\begin{proof}[Proof of Lemma \ref{le_discr:pcpmax}] Recall equation \eqref{eq:v_scheme}. By the maximum principle, for all $1\le k\le N$, we have 
$ \sup_\Omega v_k \le \max\{ \sup_\Omega v_{k-1}, \left(\frac{r_v}{r_c}\right)^{1/c} \},$
so that \eqref{discr_es:pcpmax} follows by iteration on $k=1..N$.
\end{proof} 

\subsubsection{Entropy estimate for $v$}
\begin{lem}\label{le_discr:p_ent+} Assume condition \eqref{cdt:param}. For all $p\in \R_+^\ast$, we have
\begin{eqnarray}\label{discr_es:p_ent+}
\sum_{k=1}^{N} \tau \int_\O \left| \nabla_x v_k^{p/2}\right|^2 \le C(p,\O,N\tau,u_{0},v_{0},\D),\\
\label{discr_es:energy_v_en0} \sup_{0\le k\le N}\int_\O | \log v_k| \,+\,d_v \sum_{k=1}^{N} \tau \int_\O \left| \nabla_x \log v_k\right|^2 \le \int_\O | \log v_0| \,+\, C(\O,N\tau,u_{0},v_{0},\D),
\end{eqnarray}
where the first constant only depends on $p,\O,N\tau,\|u_0\|_{\L^1\cap\H^{-1}_m(\Omega)},\sup_{\O} v_{0},\D$ and the second constant only depends on $\O,N\tau,\|u_0\|_{\L^1\cap\H^{-1}_m(\Omega)},\sup_{\O} v_{0},\D$.
\end{lem}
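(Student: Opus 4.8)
The plan is to reproduce, at the discrete-in-time level, the formal entropy computation of Section~\ref{subsec:apriori_ent}, replacing the identity $\ud_t\int_\O\phi(v)=\int_\O\phi'(v)\,\pa_t v$ by a discrete convexity (here, concavity) inequality. Throughout I exploit the regularity granted by Corollary~\ref{cor:existence_scheme_it}: each $v_k$ is continuous, bounded above by \eqref{discr_es:pcpmax}, bounded below by the positive constant $\eta_{\D,\tau}$, and $(d_v+d_\c v_k^\c)\,v_k\in\W^{2,p}(\O)$ for every $p$, so that multiplying \eqref{eq:v_scheme} by powers of $v_k$ or by $v_k^{-1}$, integrating over $\O$, and integrating by parts (using the Neumann condition \eqref{eq:bc_scheme}) are all legitimate operations with all integrals finite.

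For \eqref{discr_es:p_ent+} I would first treat the range $0<p<1$. Testing \eqref{eq:v_scheme} against $\tau\,v_k^{p-1}$ and integrating by parts, the diffusion term produces exactly $4\frac{p-1}{p^2}\int_\O(d_v+d_\c(\c+1)\,v_k^\c)\,|\nabla_x v_k^{p/2}|^2$, as in the formal computation, while the concavity of $z\mapsto z^p/p$ gives $\int_\O(v_k-v_{k-1})\,v_k^{p-1}\le\frac1p\int_\O(v_k^p-v_{k-1}^p)$. Since $\frac{p-1}{p^2}<0$, the gradient term carries a favorable sign once moved to the right-hand side; summing over $k=1,\dots,N$ the entropy differences telescope, the endpoint $\frac1p\int_\O v_N^p$ is absorbed by \eqref{discr_es:pcpmax}, and the reaction contribution $\sum_k\tau\int_\O v_k^p\,(r_c v_k^c+r_d u_k^d-r_v)$ is controlled by \eqref{discr_es:pcpmax} together with the dual estimate \eqref{discr_es:dual}: indeed $d\le 2+\a$ gives $u_k^d\le 1+u_k^{2+\a}$, hence $\sum_k\tau\int_\O u_k^d\le C$. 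This establishes \eqref{discr_es:p_ent+} for $0<p<1$. To reach arbitrary $p>0$ I copy the a priori bootstrap: writing $\nabla_x v_k^{q/2}=\frac{q}{p}\,v_k^{(q-p)/2}\,\nabla_x v_k^{p/2}$ for $q\ge1$ with a fixed $p\in(0,1)$ and using the bound \eqref{discr_es:pcpmax} to control the factor $v_k^{(q-p)/2}$ reduces the general case to the one already treated.

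For \eqref{discr_es:energy_v_en0} I would test \eqref{eq:v_scheme} against $\tau\,v_k^{-1}$. The diffusion term then yields $-\tau\int_\O(d_v+d_\c(\c+1)\,v_k^\c)\,|\nabla_x\log v_k|^2$, and the concavity of $\log$ gives $\int_\O(v_k-v_{k-1})\,v_k^{-1}\le\int_\O(\log v_k-\log v_{k-1})$. Setting $\Phi_k:=-\int_\O\log v_k$, I obtain the one-step inequality $\Phi_k+\tau\int_\O(d_v+d_\c(\c+1)\,v_k^\c)\,|\nabla_x\log v_k|^2\le\Phi_{k-1}+\epsilon_k$, where $\epsilon_k:=\tau\int_\O(r_c v_k^c+r_d u_k^d)\ge0$ satisfies $\sum_k\epsilon_k\le C$ by \eqref{discr_es:pcpmax} and \eqref{discr_es:dual} (dropping the harmless $-\tau\int_\O r_v$). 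Discarding the nonnegative gradient term and iterating gives $\Phi_k\le\Phi_0+\sum_{j\le k}\epsilon_j\le\Phi_0+C$, so $\sup_k\Phi_k\le\int_\O|\log v_0|+C$; summing the full inequality over $k$ and using that $\Phi_N$ is bounded below by \eqref{discr_es:pcpmax} produces $d_v\sum_k\tau\int_\O|\nabla_x\log v_k|^2\le\int_\O|\log v_0|+C$. Finally I decompose $\int_\O|\log v_k|=\int_\O(\log v_k)_++\int_\O(\log v_k)_-$, bounding the positive part by \eqref{discr_es:pcpmax} and the negative part by $\Phi_k$ plus a constant, which yields the claimed bound on $\sup_k\int_\O|\log v_k|$.

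The main obstacle is the passage from the continuous chain rule to its discrete surrogate: the sign of the coefficient $\frac{p-1}{p^2}$ must match the direction of the concavity inequality so that the gradient term is genuinely controlled rather than merely estimated, and for the logarithmic bound one must separately track the positive and negative parts of $\log v_k$, since the telescoping only controls $-\int_\O\log v_k$ while the $\L^\infty$ bound \eqref{discr_es:pcpmax} is what tames $\int_\O(\log v_k)_+$. Everything else is a routine discretization of the formal computation, with the reaction terms kept uniform in $N$ through the dual estimate \eqref{discr_es:dual} and the constraint $d\le2+\a$.
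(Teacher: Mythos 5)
Your proof is correct and follows essentially the same route as the paper: the paper tests \eqref{eq:v_scheme} with $\phi_p'(v_k)$ for the convex entropies $\phi_p(z)=z-z^p/p-1+1/p$ and $\phi_0(z)=z-\log z$, which is exactly your computation with $v_k^{p-1}$ (resp.\ $v_k^{-1}$) up to the harmless mass term, your concavity inequality being the mirror image of its convexity inequality $\phi_p'(z)(z-y)\ge\phi_p(z)-\phi_p(y)$, and your positive/negative part decomposition of $\log v_k$ playing the role of its inequality $|\log z|\le z-\log z$. All the other ingredients coincide with the paper's: integration by parts under the Neumann condition using the regularity from Corollary~\ref{cor:existence_scheme_it}, control of the reaction term via \eqref{discr_es:pcpmax} and the duality estimate \eqref{discr_es:dual} with $d\le 2+\a$, telescoping in $k$, and the bootstrap $\nabla_x v_k^{q/2}=(q/\tilde p)\,v_k^{(q-\tilde p)/2}\nabla_x v_k^{\tilde p/2}$ to reach $p\ge 1$.
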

\begin{proof}[Proof of Lemma \ref{le_discr:p_ent+}]
Define for all $p\in ]0,1[$ and all $z>0$
\begin{equation}
\phi_p(z)=z-\frac{z^p}{p}-1+\frac{1}{p}\ge 0, \qquad \phi_0(z)=z-\log z >0.
\end{equation}
We have (for all $p\in ]0,1[$ and all $z>0$)
\begin{equation}
\phi_p'(z)=1- z^{p-1},\qquad \phi_0'(z)=1-\frac{1}{z}, \qquad \phi_p''(z)=(1-p) z^{p-2}> 0, \qquad \phi_0''(z)=\frac{1}{z^2}> 0.
\end{equation}
For any $p\in[0,1[$, by convexity of $\phi_p$, for all $y> 0$, $z> 0$ we have $\phi_p'(z) (z-y)\ge \phi_p(z)-\phi_p(y)$. Multiplying equation \eqref{eq:v_scheme} by $\phi_p'(v_k)$ and integrating over $\O$, we therefore get
\begin{equation}\label{comp:enerv}
\int_\O [\phi_p(v_k)-\phi_p(v_{k-1})] - \tau \int_\O \phi_p'(v_k)\Delta_x [(d_v + d_\c v_k^\c)v_k] \le \tau \int_\O \phi_p'(v_k) v_k\, (r_v - r_{c}\, v_k^c - r_{d}\,u_k^d).
\end{equation}
Since $(u_k,v_k)$ satisfy the homogeneous Neumann boundary conditions \eqref{eq:bc_scheme}, we can rewrite the second term as
\begin{eqnarray}
- \tau \int_\O \phi_p'(v_k)\Delta_x [(d_v + d_\c v_k^\c)v_k]
= \tau \int_\O \nabla_x [\phi_p'(v_k)] \cdot \nabla_x [(d_v + d_\c v_k^\c)v_k]\\
= \tau \int_\O \phi_p''(v_k)d_v|\nabla_x v_k|^2+\tau \int_\O \phi_p''(v_k) d_\c (\c+1) v_k^\c|\nabla_x v_k|^2.
\end{eqnarray}
The last term being nonnegative, we have
\begin{eqnarray}
- \tau \int_\O \phi_p'(v_k)\Delta_x [(d_v + d_\c v_k^\c)v_k]
\ge \tau \int_\O \phi_p''(v_k) d_v|\nabla_x v_k|^2,
\end{eqnarray}
so that reinserting in \eqref{comp:enerv} and summing for $k=1..N$
\begin{equation}
\int_\O \phi_p(v_{N}) + \sum_{k=1}^{N} \tau \int_\O \phi_p''(v_k) d_v|\nabla_x v_k|^2 \le \int_\O \phi_p(v_{0}) + \sum_{k=1}^{N} \tau \int_\O \phi_p'(v_k) v_k\, (r_v - r_{c}\, v_k^c - r_{d}\,u_k^d).
\end{equation}
Thanks to estimate \eqref{discr_es:dual} and the assumption $d\le2+\a$ (consequence of condition \eqref{cdt:param}), we can control $u_k^d$ in $L^1$, so that, using furthermore the $L^\infty$ bound \eqref{discr_es:pcpmax} and the continuity of $v\mapsto v\phi_p'(v)$ on $\R_+$, we get
\begin{equation}\label{comp:estimvdisc}
\int_\O \phi_p(v_{N}) + \sum_{k=1}^{N} \tau \int_\O \phi_p''(v_k) d_v|\nabla_x v_k|^2 \le \int_\O \phi_p(v_{0}) + C(p,\O,N\tau,u_{0},v_{0},\D).
\end{equation}
Using the definition of $\phi_0$ and $\phi_p$ for $0<p<1$, we have
\begin{eqnarray}
 \int_\O \phi_p''(v_k) d_v|\nabla_x v_k|^2= \int_\O (1-p) v_k^{p-2} d_v|\nabla_x v_k|^2
= \int_\O (1-p) d_v \frac{4}{p^2} |\nabla_x v_k^{p/2}|^2,\\
 \int_\O \phi_0''(v_k) d_v|\nabla_x v_k|^2= \int_\O v_k^{-2} d_v|\nabla_x v_k|^2
= \int_\O d_v |\nabla_x [\log v_k]|^2,
\end{eqnarray}
so that \eqref{comp:estimvdisc} with $0<p<1$ gives \eqref{discr_es:p_ent+} for $p<1$ and, using furthermore the elementary inequality (for all $z>0$) $|\log z|\le z-\log z$, \eqref{comp:estimvdisc} with $p=0$ gives \eqref{discr_es:energy_v_en0}.

It remains to show \eqref{discr_es:p_ent+} for $p\ge 1$. Let $p\ge 1$, and let us fix $\tilde{p}\in]0,1[$. Combining \eqref{discr_es:pcpmax} and \eqref{discr_es:p_ent+} with $p$ replaced by $\tilde{p}$, we have
\begin{eqnarray}
\sum_{k=1}^{N} \tau \int_\O \left| \nabla_x v_k^{p/2}\right|^2 = \sum_{k=1}^{N} \tau \frac{p^2}{\tilde{p}^2} \int_\O v_k^{p-\tilde{p}} \left| \nabla_x v_k^{\tilde{p}/2}\right|^2
 \le C(p,\O,N\tau,u_{0},v_{0},\D).
\end{eqnarray}
\end{proof}

\subsubsection{Entropy estimate for $u$}

\begin{lem}\label{le_discr:energy_u} We have
\begin{eqnarray}
\label{discr_es:energy_u}\sup_{0\le k\le N}\int_\O u_k \,+\,\sum_{k=1}^{N} \tau \int_\O \left| \frac{\nabla_x u_k}{1+u_k}\right|^2 (1+u_k^\a) \le C(\O,N\tau,u_{0},v_{0},\D),\\
\label{discr_es:energy_u_en0} \sup_{0\le k\le N}\int_\O | \log u_k| \,+\,d_u \sum_{k=1}^{N} \tau \int_\O \left| \nabla_x \log u_k\right|^2 \le \int_\O | \log u_0| \,+ \, C(\O,N\tau,u_{0},v_{0},\D),
\end{eqnarray}
where the constant $C(\O,N\tau,u_{0},v_{0},\D)$ only depends on $\O,N\tau,\|u_0\|_{\L^1\cap\H^{-1}_m(\Omega)},\sup_{\O} v_{0},\D$.
\end{lem}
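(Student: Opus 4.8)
The plan is to reproduce, at the level of the scheme, the two formal entropy computations of Section~\ref{subsec:apriori_ent}, using the smoothness and the strict positivity $u_k,v_k\ge \eta_{\D,\tau}>0$ guaranteed by \eqref{es:depend_on_tau1}--\eqref{es:depend_on_tau3} to justify every division by $u_k$ or $1+u_k$ and every integration by parts. Before the gradient bounds, I would first record the discrete analogue of the mass estimate \eqref{es:L1}: summing \eqref{eq:u_scheme} over $\O$, the diffusion term drops out by the Neumann condition \eqref{eq:bc_scheme}, and using $r_u z\le C(a,r_u,r_a)+\frac{r_a}{2}z^{1+a}$ together with a telescoping sum over $k$ yields $\sup_{0\le n\le N}\int_\O u_n+\frac{r_a}{2}\sum_{k=1}^{N}\tau\int_\O u_k^{1+a}\le \int_\O u_0+C(\O,N\tau,\D)$, a bound depending on $u_0$ only through $\|u_0\|_{\L^1}$. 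This controls $\sup_k\int_\O u_k$ in \eqref{discr_es:energy_u} and feeds the reaction terms below.

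For \eqref{discr_es:energy_u} I would test \eqref{eq:u_scheme} against $1/(1+u_k)$ and integrate over $\O$. Integrating the diffusion term by parts (the boundary term vanishing by the Neumann condition) produces the nonnegative dissipation $\int_\O|\nabla_x\log(1+u_k)|^2\,(d_u+d_\a(\a+1)u_k^\a+d_\b v_k^\b)$ plus the cross term $\int_\O d_\b\frac{u_k}{1+u_k}\nabla_x\log(1+u_k)\cdot\nabla_x v_k^\b$, exactly as in \eqref{comp:ent_u1}. The one genuinely discrete point is the time increment: since $z\mapsto\log(1+z)$ is \emph{concave}, one has $\frac{u_k-u_{k-1}}{1+u_k}\le \log(1+u_k)-\log(1+u_{k-1})$, and it is precisely this direction of the inequality that lets the increment be bounded once it is moved to the side of the dissipation. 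Writing $\nabla_x v_k^\b=2v_k^{\b/2}\nabla_x v_k^{\b/2}$ and applying Young's inequality as in Section~\ref{subsec:apriori_ent}, I absorb half of $d_\b\int_\O v_k^\b|\nabla_x\log(1+u_k)|^2$ into the dissipation and bound the leftover $2d_\b\int_\O(u_k/(1+u_k))^2|\nabla_x v_k^{\b/2}|^2\le 2d_\b\int_\O|\nabla_x v_k^{\b/2}|^2$ by \eqref{discr_es:p_ent+} with $p=\b$. Summing over $k$, the increments telescope to $\int_\O[\log(1+u_N)-\log(1+u_0)]$, with $\int_\O\log(1+u_N)\le\int_\O u_N$ controlled by the mass bound and $-\int_\O\log(1+u_0)\le0$; the reaction term is handled by $u_k/(1+u_k)\le1$, the mass bound (with $u_k^a\le 1+u_k^{1+a}$) and the $L^\infty$ bound \eqref{discr_es:pcpmax}. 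Since $d_u,d_\a>0$, the surviving weight satisfies $d_u+d_\a(\a+1)u_k^\a+\frac{d_\b}{2}v_k^\b\ge c\,(1+u_k^\a)$, which gives \eqref{discr_es:energy_u}.

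For \eqref{discr_es:energy_u_en0} I would mirror the proof of Lemma~\ref{le_discr:p_ent+}, replacing the concave functional by the \emph{convex} $\phi_0(z)=z-\log z$. Testing \eqref{eq:u_scheme} against $\phi_0'(u_k)=1-1/u_k$ and using convexity, $\phi_0'(u_k)(u_k-u_{k-1})\ge\phi_0(u_k)-\phi_0(u_{k-1})$, so the time term telescopes favourably; integration by parts produces $\int_\O|\nabla_x\log u_k|^2(d_u+d_\a(\a+1)u_k^\a+d_\b v_k^\b)$ plus the cross term $d_\b\int_\O\nabla_x\log u_k\cdot\nabla_x v_k^\b$, which I again split by Young and absorb using the $d_\b v_k^\b$ weight and \eqref{discr_es:p_ent+}. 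Here $\phi_0'(u_k)\,u_k=u_k-1$, so the reaction contribution $\sum_k\tau\int_\O(u_k-1)(r_u-r_a u_k^a-r_b v_k^b)$ has positive part again controlled by the mass bound and \eqref{discr_es:pcpmax}. Keeping only $d_u$ in the weight and summing over $k\le n$ gives $\int_\O\phi_0(u_n)+d_u\sum_{k=1}^{n}\tau\int_\O|\nabla_x\log u_k|^2\le\int_\O\phi_0(u_0)+C$; the elementary inequality $|\log z|\le\phi_0(z)$ converts the left side into the quantities of \eqref{discr_es:energy_u_en0}, while $\int_\O\phi_0(u_0)\le\int_\O|\log u_0|+\|u_0\|_{\L^1}$ produces the right side.

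I expect the main obstacle to be the cross-diffusion term coupling the $u$-entropy dissipation to $\nabla_x v_k^\b$: as in the formal computation, it must be split so that part is absorbed into the $d_\b v_k^\b$-weighted dissipation and the remainder paid for by the already-established $v$-entropy bound \eqref{discr_es:p_ent+} — a scheme that closes only because the weight $d_\b v_k^\b$ appears in the dissipation with a more favourable constant than in the cross term. A secondary, purely discrete difficulty is that for \eqref{discr_es:energy_u} the relevant functional $\log(1+\cdot)$ is concave rather than convex, so one must verify that the resulting time-increment inequality still points in the direction needed to close the estimate.
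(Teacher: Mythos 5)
Your proposal is correct and takes essentially the same route as the paper: the paper merely packages your two steps (separate discrete mass estimate plus the concave entropy $\log(1+u_k)$) into a single convex functional $\phi(z)=2z-\log(\mu+z)$ with $\mu\in\{0,1\}$, tested via $\phi'(u_k)$, whose convexity inequality $\phi'(u_k)(u_k-u_{k-1})\ge\phi(u_k)-\phi(u_{k-1})$ reduces, after cancelling the linear part $2z$ (which is exactly your mass estimate), to your concavity inequality $\frac{u_k-u_{k-1}}{1+u_k}\le\log(1+u_k)-\log(1+u_{k-1})$, and whose $\mu=0$ case is your $\phi_0(z)=z-\log z$ argument up to the same linear shift. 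All remaining ingredients coincide with the paper's: the Young splitting of the cross term with the bound $\bigl(u_k/(\mu+u_k)\bigr)^2\le1$, absorption of half the $d_\b v_k^\b$-weighted term into the dissipation, the use of \eqref{discr_es:p_ent+} with $p=\b$, the upper bound on the reaction term, and the elementary inequalities $\phi(z)\ge z$ and $\phi_0(z)\ge|\log z|$ to read off \eqref{discr_es:energy_u} and \eqref{discr_es:energy_u_en0}.
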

\begin{proof}[Proof of Lemma \ref{le_discr:energy_u}]
Let $\phi(z)=2z-\log(\mu+z)$ for all $z>0$, with $\mu=0$ or $\mu=1$. It is useful to compute
\begin{equation}
\phi(z)=2z-\log(\mu+z), \qquad \phi'(z)=2-\frac{1}{\mu+z}, \qquad \phi''(z)=\frac{1}{(\mu+z)^2}> 0.
\end{equation}
By convexity of $\phi$, for all $y\ge 0$, $z\ge 0$ we have $\phi'(z) (z-y)\ge \phi(z)-\phi(y)$. Multiplying equation \eqref{eq:u_scheme} by $\phi'(u_k)$ and integrating over $\O$, we therefore get
\begin{equation}
\int_\O [\phi(u_k)-\phi(u_{k-1})] - \tau \int_\O \phi'(u_k)\Delta_x [(d_u + d_\a u_k^\a + d_\b v_k^\b)u_k] \le \tau \int_\O \phi'(u_k) u_k\, (r_u - r_{a}\, u_k^a - r_{b}\,v_k^b).
\end{equation}
Thanks to the homogeneous Neumann boundary conditions \eqref{eq:bc_scheme}, we can rewrite the second term as
\begin{eqnarray}
- \tau \int_\O \phi'(u_k)\Delta_x [(d_u + d_\a u_k^\a + d_\b v_k^\b)u_k]
= \tau \int_\O \nabla_x \phi'(u_k) \cdot\nabla_x [(d_u + d_\a u_k^\a + d_\b v_k^\b)u_k]\\
= \tau \int_\O \nabla_x \phi'(u_k) \cdot[(d_u + d_\a (1+\a) u_k^\a + d_\b v_k^\b) \nabla_x u_k] + \tau \int_\O \nabla_x \phi'(u_k) \cdot [ u_k\nabla_x d_\b v_k^\b]\\
= \tau \int_\O \phi''(u_k) (d_u + d_\a (1+\a) u_k^\a + d_\b v_k^\b) |\nabla_x u_k|^2 + \tau \int_\O \phi''(u_k) u_k\nabla_x u_k  \cdot \nabla_x d_\b v_k^\b.
\end{eqnarray}
Therefore
\begin{eqnarray}\label{comp:ent_log}
\int_\O [\phi(u_k)-\phi(u_{k-1})] + \tau \int_\O \phi''(u_k) (d_u + d_\a (1+\a) u_k^\a + d_\b v_k^\b) |\nabla_x u_k|^2\\
 \le - \tau \int_\O \phi''(u_k) u_k\nabla_x u_k  \cdot \nabla_x d_\b v_k^\b + \tau \int_\O \phi'(u_k) u_k\, (r_u - r_{a}\, u_k^a - r_{b}\,v_k^b).
\end{eqnarray}
The first term of the RHS can be rewritten
\begin{eqnarray}
- \tau \int_\O \phi''(u_k) u_k\nabla_x u_k  \cdot \nabla_x d_\b v_k^\b
 &=& - \tau \int_\O \phi''(u_k) u_k\nabla_x u_k \cdot d_\b \,2\, v_k^{\b/2} \nabla_x  v_k^{\b/2} \\
 &\le & \frac{\tau}{2} \int_\O \phi''(u_k)^2 u_k^2 |\nabla_x u_k|^2  d_\b \, v_k^\b  + 2 \tau \int_\O d_\b \, |\nabla_x  v_k^{\b/2}|^2\\
 &\le & \frac{\tau}{2} \int_\O \phi''(u_k) |\nabla_x u_k|^2  d_\b \, v_k^\b  + 2 \tau \int_\O d_\b \, |\nabla_x  v_k^{\b/2}|^2,
\end{eqnarray}
where we used the elementary inequality $2ab\le a^2+b^2$ (for all $a,b\in \R$) and the bound $\phi''(z) \, z^2 = [z/(\mu+z)]^2\le 1$ (for all $z> 0$). Reinserting in \eqref{comp:ent_log}, we get
\begin{eqnarray}
\int_\O [\phi(u_k)-\phi(u_{k-1})] + \tau \int_\O \phi''(u_k) (d_u + d_\a (1+\a) u_k^\a + \frac{d_\b}{2} v_k^\b) |\nabla_x u_k|^2\\
 \le  2 \tau \int_\O d_\b \, |\nabla_x  v_k^{\b/2}|^2  + \tau \int_\O \phi'(u_k) u_k\, (r_u - r_{a}\, u_k^a - r_{b}\,v_k^b).
\end{eqnarray}
Now using that (for all $z> 0$) $\phi'(z) z (r_u - r_{a}\, z^a) = (2-1/(\mu+z)) z (r_u - r_{a}\, z^a) \le C (r_u, r_a, a)$, we have
\begin{eqnarray}
\int_\O [\phi(u_k)-\phi(u_{k-1})] + \tau \int_\O \phi''(u_k) (d_u + d_\a (1+\a) u_k^\a + \frac{d_\b}{2} v_k^\b) |\nabla_x u_k|^2\\
 \le  2 \tau \int_\O d_\b \, |\nabla_x  v_k^{\b/2}|^2  + \tau |\O| C (r_u, r_a, a).
\end{eqnarray}
Summing for $k=1..N$ we get
\begin{eqnarray}
\int_\O \phi(u_{N})+ \sum_{k=1}^{N} \tau \int_\O \phi''(u_k) (d_u + d_\a (1+\a) u_k^\a + \frac{d_\b}{2} v_k^\b) |\nabla_x u_k|^2\\
 \le  \int_\O \phi(u_{0}) + 2 \sum_{k=1}^{N} \tau \int_\O d_\b \, |\nabla_x  v_k^{\b/2}|^2  + N\tau |\O| C (r_u, r_a, a),
\end{eqnarray}
and using furthermore estimate \eqref{discr_es:p_ent+} with $p=\b$ and $\phi''(z)\ge0$ (for all $z>0$),
\begin{eqnarray}
\int_\O \phi(u_{N})+ \sum_{k=1}^{N} \tau \int_\O \phi''(u_k) (d_u + d_\a (1+\a) u_k^\a) |\nabla_x u_k|^2 \le  \int_\O \phi(u_{0}) + C(\O,N\tau,u_{0},v_{0},\D).
\end{eqnarray}
We conclude \eqref{discr_es:energy_u} by taking $\mu=1$ and using the inequality $2z\ge \phi(z)=2z-\log (1+z)\ge z$ (for all $z\ge0$), and we conclude \eqref{discr_es:energy_u_en0} by taking $\mu=0$ and using the inequality $\phi(z)=2z-\log z\ge |\log z|  $ (for all $z>0$).
\end{proof}

\section{Proof of Existence}\label{sec:existence}
We are now ready to pass to the limit in the approximating scheme \eqref{eq:u_scheme}--\eqref{eq:bc_scheme} in order to obtain a solution of system \eqref{sku1}--\eqref{sku4}, at least when the condition \eqref{cdt:param2} is fulfilled.

\begin{proof}[Proof of i) under condition \eqref{cdt:param2}]
Fix $T>0$. Define the sequence $(u_0,v_0)=(u_0^N,v_0^N):=(\min(u_{in},N)+1/N,v_{in}+1/N)$, so that $(u_0,v_0)$ approximates $(u_{in},v_{in})$ in $(\L^{1}(\Omega)\cap \H^{-1}_m(\O))\times\L^\infty(\Omega)$ (when $N\longrightarrow\infty$), and for all $N$, $(u_0,v_0)\in\L^\infty(\Omega)^2$ and $(u_0,v_0)\geq 1/N>0$. For any $N$ large enough ($N> 2T \max(r_u,r_v)$), use Corollary \ref{cor:existence_scheme_it} to define the family of couple of functions $(u_k,v_k)_{1\le k\le N}$ solving \eqref{eq:u_scheme}--\eqref{eq:bc_scheme} with initial value $(u_0^N,v_0^N)$.

Note that \eqref{discr_es:dual}, \eqref{discr_es:pcpmax}, \eqref{discr_es:p_ent+}, \eqref{discr_es:energy_u}, \eqref{discr_es:energy_u_en0}, \eqref{discr_es:energy_v_en0} are therefore valid with $u_{0}$ (resp. $v_0$) replaced by $u_{in}$ (resp. $v_{in}$) in the constant $C$ in the RHS. If $\log u_{in}$ is in $\L^1(\O)$, we furthermore have that $\log u_{0}$ approximates $\log u_{in}$ in $\L^1(\O)$, so that \eqref{discr_es:energy_u_en0} actually yields a uniform bound (w. r. t. $N$). For the same reason, if $\log v_{in}$ is in $\L^1(\O)$, then \eqref{discr_es:energy_v_en0} actually yields a uniform bound (w. r. t. $N$).

\begin{definition}\label{def:pro}
For $h:=(h_k)_{0\leq k \leq N}$ a given family of functions defined on $\Omega$, we denote by $\underline{h}^{N}$ the step in time function defined on $\R \times \Omega$ by
\begin{align*}
\underline{h}^{N}(t,x):= h_0(x)\mathbf{1}_{]-\tau,0]} (t) + \sum_{k=1}^{N} h_k(x) \mathbf{1}_{](k-1)\tau,k\tau]} (t) + h_N(x) \mathbf{1}_{]N\tau,+\infty[} (t).
\end{align*}
Note that by definition, for all $p\in[1,\infty[$, we have
\begin{align*}
\|\underline{h}^{N}\|_{\L^p(]0,T[\times \O)}=\left(\sum_{k=1}^{N} \tau \int_\Omega |h_{k}(x)|^p\dd x\right)^{1/p}. 
\end{align*}
\end{definition}
With this notation, for any family $h:=(h_k)_{0\leq k \leq N}$ of distributions on $\O$, we have
\begin{equation}
\pa_t \underline{h}^N = h_0(x) \, \otimes \, \delta_{-\tau}(t) + \sum_{k=0}^{N-1} \{ h_{k+1}(x)-h_k(x) \} \, \otimes \, \delta_{k\tau}(t) \quad \text{in } \mathscr{D}'(\R\times \O)/
\end{equation}
In particular, we can rewrite equations \eqref{eq:u_scheme}--\eqref{eq:v_scheme} as
\begin{eqnarray}\label{eq:sku1dis}
\pa_t \underline{u}^N &=& u_0(x) \, \otimes \, \delta_{-\tau}(t) + \sum_{k=1}^N  \tau \left(\Delta_x [ (d_u+d_\a u_k^\a+d_\b v_k^\b) u_k ] + (r_u-r_a u^a_k-r_bv_k^b)u_k\right)\, \otimes \, \delta_{k\tau}(t),\\
\label{eq:sku2dis}\pa_t \underline{v}^N &=& v_0(x) \, \otimes \, \delta_{-\tau}(t) + \sum_{k=1}^N  \tau \Big(\Delta_x [ (d_v+d_\c v_k^\c) v_k ] + (r_v-r_c v^c_k-r_d u_k^d)v_k\Big)\, \otimes \, \delta_{k\tau}(t).
\end{eqnarray}
We want to pass to the limit when $N=T/\tau \rightarrow \infty$ in the two equations above. Estimates \eqref{discr_es:dual}, \eqref{discr_es:pcpmax}, \eqref{discr_es:p_ent+}, \eqref{discr_es:energy_u} and \eqref{discr_es:energy_u_en0}, \eqref{discr_es:energy_v_en0} 
can be respectively rewritten as
\begin{eqnarray}
\label{es:unif1}\int_0^T \int_\Omega (\underline{u}^N)^2\,(1+ (\underline{u}^N)^\a)  \leq  C(\Omega,u_{in},v_{in},\D, T),\\
\label{es:unif2}\sup_{[0,T]} \sup_\Omega \underline{v}^N \le \max\{ \sup_\Omega v_{in}, \left(\frac{r_v}{r_c}\right)^{1/c} \},\\
\label{es:unif3}\text{for all } 0<p<\infty, \quad \int_0^T \int_\O \left| \nabla_x (\underline{v}^N)^{p/2}\right|^2 \le C(p,\O,T,u_{in},v_{in},\D),\\
\label{es:unif4}\sup_{[0,T]} \int_\O \underline{u}^N \,+\,\int_0^T \int_\O \left| \frac{\nabla_x \underline{u}^N}{1+\underline{u}^N}\right|^2 (1+(\underline{u}^N)^\a) \le C(\O,T,u_{in},v_{in},\D),\\
\label{es:unif4bisu}\sup_{[0,T]} \int_\O |\log \underline{u}^N| \,+\,d_u \int_0^T \int_\O \left| \nabla_x \log \underline{u}^N\right|^2 \le  \int_\O | \log u_0^N| \,+ \, C(\O,T,u_{in},v_{in},\D),\\
\label{es:unif4bisv}\sup_{[0,T]} \int_\O |\log \underline{v}^N| \,+\,d_v\int_0^T \int_\O \left| \nabla_x \log \underline{v}^N\right|^2 \le \int_\O |\log v^N_0| \,+\, C(\O,T,u_{in},v_{in},\D).
\end{eqnarray}
Note that all bounds give rise to estimates which are uniform with respect to $N$ (with the additional assumption that $\log u_{in}$, resp. $\log v_{in}$, lies in $\L^1(\O)$ for \eqref{es:unif4bisu}, resp. \eqref{es:unif4bisv}). As a consequence of \eqref{es:unif1}--\eqref{es:unif4}, we have uniformly w.r.t. $N$
\begin{equation}\label{es:unif5}
\underline{u}^N \in \L^{2+\a},\qquad \underline{v}^N \in \L^{\infty},\qquad \nabla_x \underline{u}^N \in \L^{1
}, \qquad \nabla_x \underline{v}^N \in \L^{2}.
\end{equation}
The first bound is a direct consequence of \eqref{es:unif1}, while the third bound is obtained by writing $\nabla_x \underline{u}^N=(1+\underline{u}^N) \times (\nabla_x \underline{u}^N/(1+\underline{u}^N))\in \L^{2+\a}\times\L^2$ thanks to estimates \eqref{es:unif1} and \eqref{es:unif4}. The second and last bounds are a direct consequence of \eqref{es:unif2} and \eqref{es:unif3} with $p=2$. Using furthermore condition \eqref{cdt:param2}, \eqref{es:unif5} leads to
\begin{eqnarray}
(r_u-r_a (\underline{u}^N)^a-r_b(\underline{v}^N)^b)\underline{u}^N \in \L^{1+\mu}, \qquad (r_v-r_c (\underline{v}^N)^c-r_d (\underline{u}^N)^d)\underline{v}^N \in \L^{1+\mu},\\
\label{es:unif5.4}(d_u+d_\a (\underline{u}^N)^\a+d_\b (\underline{v}^N)^\b) \underline{u}^N \in \L^{1+1/(1+\a)}, \qquad (d_v+d_\c (\underline{v}^N)^\c) \underline{v}^N \in \L^{\infty},
\end{eqnarray}
where $1+\mu=(2+\a)/\max(1+a,d)>1$.

Rewriting system \eqref{eq:u_scheme}--\eqref{eq:v_scheme} as the following equalities (which hold in $\mathscr{D}'(]0,T[\times\O)$)
\begin{eqnarray}
\frac{1}{\tau} \{\underline{u}^N -S_\tau \underline{u}^N\} = \Delta_x [(d_u+d_\a (\underline{u}^N)^\a+d_\b (\underline{v}^N)^\b) \underline{u}^N] + (r_u-r_a (\underline{u}^N)^a-r_b(\underline{u}^N)^b)\underline{u}^N,\\
\frac{1}{\tau} \{\underline{v}^N -S_\tau \underline{v}^N\} = \Delta_x [(d_v+d_\c (\underline{v}^N)^\c) \underline{v}^N] + (r_v-r_c (\underline{u}^N)^c-r_d (\underline{u}^N)^d)\underline{v}^N,
\end{eqnarray}
where $S_\tau:u(t,x)\mapsto u(t-\tau,x)$, we finally get
\begin{equation}\label{es:unif6}
\frac{1}{\tau} \{\underline{u}^N -S_\tau \underline{u}^N\} \in \L^{1}(]0,T[,\W^{-2,1}(\O)),\qquad \frac{1}{\tau} \{\underline{v}^N -S_\tau \underline{v}^N\} \in \L^{1}(]0,T[,\W^{-2,1}(\O)).
\end{equation}
The uniform (w.r.t. $N$) bounds \eqref{es:unif5}, \eqref{es:unif6} are sufficient to apply a discrete version of Aubin-Lions lemma (see for example \cite{drejun}) to get the strong convergences (up to a subsequence) when $N\longrightarrow\infty$
\begin{equation}\label{cv:u_v}
\underline{u}^N\longrightarrow u\ge0 \text{ in } \L^{1}, \qquad\underline{v}^N\longrightarrow v\ge 0 \text{ in } \L^{1}.
\end{equation}
Let us first check that these strong convergences allow us to pass to the limit in the uniform estimates \eqref{es:unif1}--\eqref{es:unif4bisv} to get estimates \eqref{th_es1}--\eqref{th_es6v}. We can directly pass to the limit in estimate \eqref{es:unif2}, and we use Fatou's lemma to pass to the limit in estimate \eqref{es:unif1} and in the first terms in estimates \eqref{es:unif4}, \eqref{es:unif4bisu} and \eqref{es:unif4bisv}. To pass to the limit in \eqref{es:unif3}, we first notice that $(\underline{v}^N)^{p/2}$ is uniformly bounded in $\L^{2+}$ thanks to estimate \eqref{es:unif2}, so that it converges weakly in $\L^2$. We conclude by using the weak lower semi-continuity of the $\L^2(]0,T[,\W^{1,2}(\O))$ norm.

To compute the remaining limit in \eqref{es:unif4}, it is convenient to notice that $(1+u^\a)\sim (1+u)^{\a}$ (in the sense that there exist $c_\a$, $C_\a>0$ such that for $u\ge 0$, we have $c_\a (1+u)^{\a} \le (1+u^\a) \le C_\a (1+u)^{\a}$), so that \eqref{es:unif4} actually yields a uniform bound in $\L^1$ for $|\nabla_x \underline{u}^N|^2\, (1+\underline{u}^N)^{\a-2}= 4/\a^2\,|\nabla_x (1+\underline{u}^N)^{\a/2}|^2$ if $\a>0$ (resp. for $|\nabla_x \underline{u}^N|^2\, (1+\underline{u}^N)^{\a-2}= |\nabla_x \log(1+\underline{u}^N)|^2$ if $\a=0$). Since $(1+\underline{u}^N)^{\a/2}$ (resp. $\log(1+\underline{u}^N)$) is uniformly bounded in $\L^{2+}$ thanks to estimate \eqref{es:unif1}, it converges weakly in $\L^2$. Using the weak lower semi-continuity of the $\L^2(]0,T[,\W^{1,2}(\O))$ norm, we get the desired bound.

To compute the remaining limit in \eqref{es:unif4bisu}, we first observe that, thanks to Poincar\'e-Wirtinger's inequality,
\begin{equation*}\begin{split}
\int_0^T\int_\O \left|\log \underline{u}^N\right|^2 & = \int_0^T\int_\O \left||\O|^{-1}\int_\O \log \underline{u}^N\,+\,\log \underline{u}^N\,-\,|\O|^{-1}\int_\O \log \underline{u}^N\right|^2\\
&\le 2 \int_0^T \int_\O|\O|^{-2}\left| \int_\O \log \underline{u}^N\right|^2 \,+ \, 2 \int_0^T\int_\O \left|\log \underline{u}^N-|\O|^{-1}\int_\O \log \underline{u}^N\right|^2\\
&\le 2 T |\O|^{-1} \left(\sup_{[0,T]} \int_\O \left|\log \underline{u}^N\right|\right)^2 \,+ \,2 C(\O)^2 \int_0^T\int_\O \left|\nabla_x\log \underline{u}^N\right|^2,
\end{split}\end{equation*}
so that \eqref{es:unif4bisu} yields a uniform bound for $\log \underline{u}^N$ in $\L^2$. Together with the strong convergence \eqref{cv:u_v}, this implies that $\log \underline{u}^N$ converges towards $\log u$ weakly in $\L^1$, which allows us to pass to the limit in the second term of \eqref{es:unif4bisu} thanks to the weak lower semi-continuity of the $\L^2$ norm. The same arguments allow us to pass to the limit in \eqref{es:unif4bisv}.

It remains to check that $(u,v)$ is a solution of \eqref{sku1}--\eqref{sku4} in the sense of Definition \ref{def:weaksol}. Clearly $(u,v)$ lies in $L^{\max(1+a,d)}\times L^{\infty}$ thanks to estimates \eqref{th_es1}, \eqref{th_es2} and condition \eqref{cdt:param2}. Furthermore, using estimates \eqref{th_es1}--\eqref{th_es5}, it is classical to validate the following computations (thanks to regularized approximations of $(1+u)^{\a/2}$ and $v^\b$) when $\a>0$
\begin{eqnarray*}
\nabla_x [ d_\a\, u^{1+\a} ]
= d_\a\left(2+\frac{2}{\a}\right)\stackrel{\in\L^{2}}{\overbrace{\nabla_x (1+u)^{\a/2}}} \, \stackrel{\in\L^{2}}{\overbrace{u^{\alpha}/(1+u)^{\a/2-1}}}\, \in \L^1,\\
\nabla_x [ (d_u +d_\b v^\b) u ] = \frac{2}{\a}\stackrel{\in\L^{\infty}}{\overbrace{(d_u +d_\b v^\b)}} \, \stackrel{\in\L^{2}}{\overbrace{(1+ u)^{1-\a/2} }}\, \stackrel{\in\L^{2}}{\overbrace{\nabla_x (1+u)^{\a/2} \phantom{\big|}}} + d_\b\stackrel{\in\L^{2}}{\overbrace{\nabla_x  v^\b }} \,\stackrel{\in\L^{2}}{\overbrace{u\phantom{\big|}}}\, \in \L^1,
\end{eqnarray*}
and,  (thanks to regularized approximations of $\log(1+u)$ and $v^\b$) when $\a=0$,
\begin{eqnarray*}
\nabla_x [ (d_u+d_\a +d_\b v^\b) u ] = \frac{2}{\a}\stackrel{\in\L^{\infty}}{\overbrace{(d_u+d_\a +d_\b v^\b)}}\, \stackrel{\in\L^{2}}{\overbrace{(1+ u)\phantom{\big|} }}\,\stackrel{\in\L^{2}}{\overbrace{\nabla_x \log(1+u) \phantom{\big|}}} + d_\b\stackrel{\in\L^{2}}{\overbrace{\nabla_x  v^\b }} \,\stackrel{\in\L^{2}}{\overbrace{u\phantom{\big|}}}\, \in \L^1.
\end{eqnarray*}
Therefore, we also have that $\nabla_x \left[(d_u+ d_\a u^\a+d_\b v^\b)\,u\right]$, $\nabla_x \left[(d_v+ d_\c v^\c)\,v\right]$ lie in $\L^1$. Let $\psi_1$, $\psi_2$ be two test functions in $C^2_c([0,T[ \times {\ov{\Omega}})$. Let us first extend $\psi_1$, $\psi_2$ on $\R_+ \times {\ov{\Omega}}$ by zero; then we extend $\psi_1$, $\psi_2$ on $\R \times {\ov{\Omega}}$ in such a way that $\psi_1$, $\psi_2$ lie in $C^2_c(\R \times {\ov{\Omega}})$. Testing equation \eqref{eq:sku1dis} with $\psi_1$ gives
\begin{eqnarray}
 - \int_{-\infty}^{\infty}\int_{\Omega} (\pa_t \psi_1)\, \underline{u}^N - \int_{\Omega} \psi_1(-\tau,\cdot)\, u_{0} \\
= \sum_{k=1}^N \tau \int_{\Omega} \Delta_x \psi_1 (k\tau) \left[(d_u+ d_\a u_k^\a+d_\b v_k^\b)\,u_k\right] +  \sum_{k=1}^N \tau \int_{\Omega} \psi_1(k\tau)\, u_k\, (r_u - r_{a}\, u_k^a - r_{b}\,v_k^b), 
\end{eqnarray}
where all terms are well defined (all integrands are integrable on their respective domains of integration). We rewrite this equation as
\begin{eqnarray}\label{eq:weak_form_N}
 - \int_{-\infty}^{\infty}\int_{\Omega} (\pa_t \psi_1)\, \underline{u}^N - \int_{\Omega} \psi_1(-\tau,\cdot)\, u_{0} \\
= \int_{-\infty}^\infty \int_{\Omega} \Delta_x \tilde{\psi}_1^N \left[(d_u+ d_\a (\underline{u}^N)^\a+d_\b (\underline{v}^N)^\b)\,\underline{u}^N\right] +  \int_{-\infty}^\infty \int_{\Omega} \tilde{\psi}_1^N\, \underline{u}^N\, (r_u - r_{a}\, (\underline{u}^N)^a - r_{b}\,(\underline{v}^N)^b), 
\end{eqnarray}
where
\begin{eqnarray}
\tilde{\psi}_1^N(t,x):= \sum_{k=1}^N \psi_1(k\tau,x)  \mathbf{1}_{](k-1)\tau,k\tau]} (t) \longrightarrow \psi_1 (t,x) \mathbf{1}_{]0,T]} (t) \quad \text{in } \L^\infty(\R\times\Omega).
\end{eqnarray}
Note that we also have
\begin{eqnarray}
\Delta_x \tilde{\psi}_1^N(t,x)= \sum_{k=1}^N \Delta_x \psi_1(k\tau,x)  \mathbf{1}_{](k-1)\tau,k\tau]} (t) \longrightarrow \Delta_x\psi_1 (t,x) \mathbf{1}_{]0,T]} (t) \quad \text{in } \L^\infty(\R\times\Omega).
\end{eqnarray}
Therefore, thanks to the uniform integrability (given by \eqref{es:unif3}--\eqref{es:unif4}) and the strong convergences \eqref{cv:u_v}, we have the convergences when $N\longrightarrow\infty$
\begin{eqnarray}
 - \int_{-\infty}^{\infty}\int_{\Omega} (\pa_t \psi_1)\, \underline{u}^N \longrightarrow  - \int_{0}^{T}\int_{\Omega} (\pa_t \psi_1)\, u,\\
\int_{-\infty}^\infty \int_{\Omega} \tilde{\psi}_1^N\, \underline{u}^N\, (r_u - r_{a}\, (\underline{u}^N)^a - r_{b}\,(\underline{v}^N)^b) 
\longrightarrow
\int_0^T \int_{\Omega} \psi_1\, u\, (r_u - r_{a}\, u^a - r_{b}\,v^b),\\
\int_{-\infty}^\infty \int_{\Omega} \Delta_x \tilde{\psi}_1^N \left[(d_u+ d_\a (\underline{u}^N)^\a+d_\b (\underline{v}^N)^\b)\,\underline{u}^N\right] 
\longrightarrow 
\int_0^T \int_{\Omega} \Delta_x \psi_1 \left[(d_u+ d_\a u^\a+d_\b v^\b)\,u\right],
\end{eqnarray}
where the last term can be rewritten (since $\nabla_x\left[(d_u+ d_\a u^\a+d_\b v^\b)\,u\right]$ lies in $\L^1$)
\begin{eqnarray}
\int_0^T \int_{\Omega} \Delta_x \psi_1 \left[(d_u+ d_\a u^\a+d_\b v^\b)\,u\right]=- \int_0^T \int_{\Omega} \nabla_x \psi_1 \cdot \nabla_x\left[(d_u+ d_\a u^\a+d_\b v^\b)\,u\right].
\end{eqnarray}
It remains to treat the term coming from the initial data in \eqref{eq:weak_form_N}. By the dominated convergence theorem and the definition of $u_0$,
\begin{eqnarray}
- \int_{\Omega} \psi_1(-\tau,\cdot)\, u_{0} \longrightarrow - \int_{\Omega} \psi_1(0,\cdot)\, u_{in}.
\end{eqnarray}
Replacing these four convergences in \eqref{eq:weak_form_N}, we get that \eqref{eq:weak_form} is satisfied.  A straightforward density argument allows us to replace $\psi_1$ in \eqref{eq:weak_form} by any test function in $C^1_c([0,T[ \times {\ov{\Omega}})$.
Performing very similar computations for equation \eqref{eq:sku2dis} with the test function $\psi_2$, we can finally show that $(u,v)$ is a local (in time) weak solution solution of \eqref{sku1}--\eqref{sku4} on $[0,T]$.

Performing iteratively this proof on $[0,2T]$, $[0,3T]$, ..., we can define $(u,v)$ on $\R_+\times \Omega$ in such a way that $(u,v)$ is a (global) weak solution of \eqref{sku1}--\eqref{sku4} and it satisfies estimates \eqref{th_es1}--\eqref{th_es5} for any $T>0$.
\end{proof}

\section{Special cases}\label{sec:special}
This section is devoted to the end of the proof of Theorem \ref{theo_ex_csd}, that is, the proof of i) when condition \eqref{cdt:param2} is not fulfilled and the proof of ii) and iii).

Recall condition \eqref{cdt:param}. "Condition \eqref{cdt:param2} is not fulfilled" exactly means that
\begin{equation}
\text{$\a=0$ and \{($a= 1$, $d\le 2$) or ($a< 1$, $d=2$)\}}.
\end{equation}
In the subsection below, we will prove together i) and ii) under the wider condition
\begin{equation}\label{cdt:param3}
\text{$\a=0$ and ($a\le 1$, $d\le 2$)},
\end{equation}
which is the condition required in ii) in our Theorem. Note that we therefore freely have a second proof of i) in the case $\a=0$ and $a<1$, $d<2$.

The last subsection is devoted to the proof of iii).

\subsection{The case $\a=0$}\label{subsec:alpha0}
The following Lemma is crucial to establish estimate \eqref{th_es7}. It is adapted from a similar result in \cite{DesTres} (one main difference being that the version stated below tackles weaker forms of solutions), which is itself adapted from an original result in \cite{CDF14}.
\begin{lem}\label{jeveuxunnom}
We consider $T>0$, $\Omega$ a bounded regular open set of $\R^m$ ($m\in \N^*$), $q>1$, a function $R$ in $L^q(]0,T[\times\Omega)$, and a function $M$ in $L^\infty(]0,T[\times\Omega)$ satisfying
\begin{equation}
R(t,x) \le K \qquad \text{and} \qquad 0<m_0\le M (t,x) \le m_1 \qquad \text{for } t\in ]0,T[\text{ and } x\in \Omega,
\end{equation}
for some constants $K>0$, $m_0, m_1>0$. Then, one can find $\nu\in]0,1[$ depending only on $\Omega$ and the constants $m_0$, $m_1$, such that for any initial datum $u_{in}$ in $L^2(\Omega)$, any nonnegative \textbf{very weak} solution $u\in L^{2-\nu}(]0,T[\times\Omega)$ of the system
\begin{equation}\label{syst:duality_lemma}\left\{\begin{aligned}
& \partial_{t}u - \Delta_x (Mu) = R \leq K \text{ in }]0,T[\times\Omega,\\
& u(0,x)=u_{in}(x) \text{ in } \Omega,\\
& \nabla_x (Mu)(t,x)\cdot n(x) = 0 \text{ on } ]0,T[\times\partial\Omega,
\end{aligned}\right.\end{equation}
(in the sense that  for all test functions $\psi \in C^2_c([0,T[\times\overline{\O})$ satisfying $\nabla_x \psi \cdot n = 0$ on $[0,T]\times\partial\Omega$, the equality
\begin{equation}\label{duallem:weak_form}
 \int_0^T\int_\O u \pa_{t} \psi + \int_\O u_{in} \psi(0,\cdot) + \int_0^T\int_\O Mu \Delta_x \psi + \int_0^T\int_\O \psi  R =0
\end{equation}
is verified), satisfies
\begin{equation}\label{dual_estimate}
\|u\|_{L^{2+\nu}(]0,T[\times\Omega)} \le C_T\, \left( \|u_{in}\|_{L^2(\Omega)}+K \right),
\end{equation}
where the constant $C_T>0$ depends only on $\Omega$, $T$ and the constants $m_0$, $m_1$.
\end{lem}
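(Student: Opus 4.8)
This is a duality estimate for a single linear parabolic equation with variable (bounded) diffusion coefficient $M$. We want to show that a nonnegative very weak solution of $\partial_t u - \Delta(Mu) = R$ with Neumann-type boundary conditions gains integrability: from $L^{2-\nu}$ to $L^{2+\nu}$. The key structural feature is that $M$ is bounded between $m_0$ and $m_1$, and the gain $\nu$ should depend only on this spread.

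**The approach (Pierre-Schmitt / CDF14 duality):** The standard trick is to test the equation against the solution $w$ of a dual (backward) problem with a carefully chosen source, and to exploit maximal regularity for the heat-type dual operator.

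Let me sketch the plan.

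== PROOF PLAN ==

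The plan is to use the duality method of Pierre--Schmitt, in the refined $L^{2+\nu}$ form of \cite{CDF14}. The central idea is to estimate $\|u\|_{L^{2+\nu}}$ by testing the equation against the solution of a suitable dual backward problem and invoking maximal regularity.

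First I would fix an exponent $p'$ slightly below $2$ (so that its conjugate $p$ is slightly above $2$; the gain $\nu$ will come from allowing $p$ to exceed $2$) and, for an arbitrary nonnegative $\theta\in L^{p'}(]0,T[\times\O)$, introduce the \textbf{dual problem}
\begin{equation*}
-\partial_t w - M\,\Delta_x w = \theta \quad\text{in }]0,T[\times\O,\qquad \nabla_x w\cdot n=0 \text{ on }\partial\O,\qquad w(T,\cdot)=0.
\end{equation*}
By duality, $\int_0^T\!\!\int_\O u\,\theta$ should be controlled by $\|u_{in}\|_{L^2}$ and $K$ times suitable norms of $w$; testing \eqref{duallem:weak_form} with (a regularization of) $w$ and using $w(T,\cdot)=0$, $R\le K$, and $u\ge0$ gives, roughly,
\begin{equation*}
\int_0^T\!\!\int_\O u\,\theta \;\le\; \int_\O u_{in}\,w(0,\cdot) \;+\; K\int_0^T\!\!\int_\O w \;\le\; \|u_{in}\|_{L^2}\,\|w(0,\cdot)\|_{L^2}+K\,\|w\|_{L^1}.
\end{equation*}
The point of the sign condition $R\le K$ together with $u\ge0$ is that the bad term $\int u(R-K)$ has a favorable sign once $w\ge0$, which holds by the maximum principle for the dual problem when $\theta\ge0$.

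The crux is then the \textbf{elliptic/parabolic regularity estimate for the dual operator}. Writing the dual equation as $-\partial_t w-m_0\Delta_x w = \theta+(M-m_0)\Delta_x w$ and using $L^p$ maximal regularity for the heat semigroup with Neumann conditions, one controls $\|\Delta_x w\|_{L^p}$ by $\|\theta\|_{L^p}$ provided the perturbation constant $\|(M-m_0)/m_0\|_{L^\infty}\le (m_1-m_0)/m_0$ is compatible with the norm of the maximal-regularity operator, whose $L^p\to L^p$ norm tends to $1$ as $p\to2$ (it equals $1$ exactly at $p=2$ by the spectral theorem). This is exactly where the restriction to $p$ close to $2$, hence a small $\nu$ depending only on $\O,m_0,m_1$, enters: one chooses $\nu$ so small that the perturbation series (Neumann series) for the dual problem converges in $L^{p}$. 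This yields $\|w(0,\cdot)\|_{L^2}+\|w\|_{L^1}\lesssim C_T\|\theta\|_{L^{p'}}$, and taking the supremum over $\theta$ with $\|\theta\|_{L^{p'}}\le1$ gives \eqref{dual_estimate} with $p=2+\nu$.

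\textbf{Main obstacle.} I expect the hard part to be making the duality argument rigorous at the level of \emph{very weak} solutions: the natural test function $w$ is only as regular as maximal regularity provides (roughly $W^{2,p}$ in space, $W^{1,p}$ in time), which need not lie in $C^2_c$, so \eqref{duallem:weak_form} cannot be applied directly. I would handle this by a density/approximation argument — smoothing $\theta$, solving the dual problem with smooth data, verifying the required sign ($w\ge0$) and regularity ($w\in C^2$), passing to the identity, and only then relaxing $\theta$ by density in $L^{p'}$, using the a priori bound $u\in L^{2-\nu}$ to guarantee that the left-hand side $\int u\,\theta$ is well defined along the approximation. The quantitative tracking of how the maximal-regularity constant approaches $1$ as $p\to2$, so that $\nu$ can be chosen depending only on $\O,m_0,m_1$ (and not on $K$ or $u_{in}$), is the other delicate point, and is exactly the content borrowed from \cite{CDF14}.
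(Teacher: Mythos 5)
Your overall route --- duality against a backward problem with source in $L^{p'}$ for $p'$ slightly below $2$, a maximal-regularity estimate for the dual operator at exponents near $2$, the sign argument ($\theta\ge0$ hence $w\ge0$ by the maximum principle), and a final duality --- is the same as the paper's, but two steps fail as written. The first is quantitative: you center the perturbation at $m_0$, writing $-\partial_t w-m_0\Delta_x w=\theta+(M-m_0)\Delta_x w$, so your Neumann series requires $C_{m_0,p}\,(m_1-m_0)<1$, where $C_{m,r}$ is the constant in $\|\Delta_x w\|_{L^r}\le C_{m,r}\|\theta\|_{L^r}$ for constant diffusivity $m$. Since the energy estimate gives $C_{m_0,2}=1/m_0$, at $p=2$ your condition reads $(m_1-m_0)/m_0<1$, i.e. $m_1<2m_0$: a smallness restriction on the oscillation of $M$ that the lemma does not assume. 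The paper (following Lemma 2.2 of \cite{CDF14}) centers instead at the midpoint $\bar m=(m_0+m_1)/2$, for which the relevant product at $p=2$ is $C_{\bar m,2}\,(m_1-m_0)/2=(m_1-m_0)/(m_0+m_1)<1$ unconditionally; the estimate for all $r\in[2-\nu_1,2+\nu_1]$ then follows by continuity of $C_{\bar m,r}$ near $r=2$ (for $r>2$ the paper obtains this by an interpolation between $r=2$ and $r=4$).

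The second gap is in the regularization. You propose to smooth only $\theta$ and to "verify $w\in C^2$"; but with $M$ merely in $L^\infty$ the dual solution is never $C^2$ --- the perturbation argument yields only $\Delta_x w\in L^r$ --- so $w$ is not an admissible test function for the very weak formulation, and smoothing the source does not help. The paper must mollify the coefficient itself into $M^\epsilon$ and solve the dual problem with smooth coefficient; the duality computation against $u$ then produces the commutator term $\int_0^T\int_\Omega(M^\epsilon-M)\,u\,\Delta_x v^\epsilon$, and controlling it is exactly where the hypothesis $u\in L^{2-\nu}$ enters: because $2-\nu$ is strictly larger than $(2+\nu_1)'$, H\"older's inequality leaves a factor $\|M^\epsilon-M\|_{L^s}$ with $s<\infty$ which tends to $0$, and this requires the dual estimate at the exponent $2+\nu_1>2$ as well, not only below $2$. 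This is the paper's "second step", in effect a uniqueness statement identifying $u$ with the strong limit of the regularized solutions $u^\epsilon$ of the mollified problem, after which the final bound is proved for $u^\epsilon$ and passed to the limit. Your proposal instead assigns $u\in L^{2-\nu}$ only the minor role of making $\int u\,\theta$ well defined, which misidentifies its actual function; without the commutator control, the passage from the regularized identity back to the very weak solution $u$ is unjustified. The remainder of your plan (nonnegativity of $w$, bounding $K\int w$ via the equation, final duality at exponent $2-\nu_1$ yielding the $L^{(2-\nu_1)'}$ bound) does match the paper's third step.
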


\begin{proof}[Proof of Lemma \ref{jeveuxunnom}]
The proof relies on the study of the dual problem. More precisely since here we consider very weak types of solutions for \eqref{syst:duality_lemma}, the analysis can be done rigorously only through (the dual problem of) a "regularized" version of \eqref{syst:duality_lemma} (that is, a family of systems with smooth data and from which we can recover asymptotically the original system \eqref{syst:duality_lemma} in some sense).
The first step of the proof is to define a "regularized" version of \eqref{syst:duality_lemma} and study its dual problem \eqref{syst:reg_dual_pb}. The second step is to ensure that any solution $u$ (of the original problem) considered is the very limit (in some sense specified later) of the solutions of the "regularized" problem (note that this is a result of uniqueness for the original problem). Finally, the third step is to establish an estimate of the type \eqref{dual_estimate} for the solution of the "regularized" system, so that \eqref{dual_estimate} follows after passage to the limit.

\paragraph{First step: regularization and dual problem.}
Let $(\rho^\epsilon)_{0<\epsilon<1}$ be a family of mollifiers on $\R^{m+1}$ such that for all $0<\epsilon<1$,
\begin{equation}
\rho^\epsilon\ge0, \qquad \rho^\epsilon\in C_c^\infty(\R^{m+1}), \qquad \text{supp}\,\rho^\epsilon \subset B_{m+1}(0,\epsilon), \qquad \int_\R\int_{\R^m} \rho^\epsilon =1.
\end{equation}
We extend $M$ by $(m_0+m_1)/2$ on a layer of width 1 around $]0,T[\times\O$, then extend it by 0 on $\R^{m+1}$, that is,
\begin{equation}
M(t,x)= (m_0+m_1)/2 \;\text{ if } \; 0<\text{dist}((t,x),]0,T[\times\O)<1,\qquad M(t,x)=0 \;\text{ if } \; 1<\text{dist}((t,x),]0,T[\times\O),
\end{equation}
 and we define the family $M^\epsilon:=[\rho^\epsilon\ast M]_{|[0,T]\times\overline{\O}}$ (convolution in $\R^{m+1}$ then restriction on $[0,T]\times\overline{\O}$). Therefore
\begin{eqnarray}\label{prop:M_reg}
\text{for all $\epsilon\in ]0,1[$,}&\qquad M^\epsilon \in C^\infty([0,T]\times\overline{\O}), \;\qquad m_0\le M^\epsilon \le m_1, \\
\text{for all $p\in [1,\infty[$,}&\qquad M^\epsilon\longrightarrow M \;\text{in}\; L^p(]0,T[\times\O) \quad \text{when} \quad \epsilon \longrightarrow 0.
\end{eqnarray}
We also define a family $(u_{in}^\epsilon)_\epsilon$ of smooth functions on $\overline{\O}$, which are identically zero on a ($\epsilon$-dependent) neighborhood of $\pa\O$, and which approximates $u_{in}$ in $\L^2(\O)$.

We are now ready to consider the following problem (which can be seen as a "regularized" version of \eqref{syst:duality_lemma})
\begin{equation}\label{syst:duality_lemma_approx}\left\{\begin{aligned}
& \partial_{t}u^\epsilon - \Delta_x (M^\epsilon u^\epsilon) = R \text{ in }]0,T[\times\Omega,\\
& u^\epsilon(0,x)=u_{in}^\epsilon(x) \text{ in } \Omega,\\
& \nabla_x (M^\epsilon u^\epsilon)(t,x)\cdot n(x) = 0 \text{ on } ]0,T[\times\partial\Omega,
\end{aligned}\right.\end{equation}
Note that since $M^\epsilon$ is $C^\infty$, we can always write $\Delta_x [M^\epsilon u^\epsilon] = \Delta_x M^\epsilon\, u^\epsilon + 2 \nabla_x M^\epsilon \cdot \nabla_x u^\epsilon + M^\epsilon \, \Delta_x u^\epsilon$ in the sense of distributions (for $u^\epsilon$ a distribution). By classical results of the Theory of Linear Parabolic Equations (see for example Theorem 9.1, together with the final sentence in paragraph 9, in Chapter IV in \cite{LSU}), there exists a unique $u^\epsilon$ satisfying $\pa_t u^\epsilon$, $\nabla_x^2 u^\epsilon\,\in \L^{q}(]0,T[\times\O)$ which solves the problem \eqref{syst:duality_lemma_approx} in the strong sense.\\
We now introduce the dual problem
\begin{equation}\label{syst:reg_dual_pb}\left\{\begin{aligned}
& \partial_{t}v^\epsilon + M^\epsilon \Delta_x v^\epsilon = f \text{ in }[0,T]\times\overline{\O},\\
& v^\epsilon(T,x)=0 \text{ in } \overline{\O},\\
& \nabla_x  v^\epsilon(t,x)\cdot n(x) = 0 \text{ on } [0,T]\times\partial\Omega,
\end{aligned}\right.\end{equation}
where $f$ is any smooth function on $[0,T]\times\overline{\O}$. Since $M^\epsilon$ and $f$ are smooth, this problem has a unique classical solution $v^\epsilon\in C^\infty([0,T]\times\overline{\O})$ (see for example Theorem 5.3, Chapter IV in \cite{LSU}). We claim that there exists $\nu_1\in]0,1[$ depending only on $\Omega$, $m_0$, $m_1$ such that for all $r\in[2-\nu_1,2+\nu_1]$ and all $\epsilon\in]0,1[$,
\begin{equation}\label{est:dual_reg}
\|\Delta_x v^\epsilon\|_{\L^r(]0,T[\times\O)} \le C(\O,m_0,m_1,p) \|f\|_{\L^r(]0,T[\times\O)}\quad \text{and} \quad \|v^\epsilon(0,\cdot)\|_{\L^2(\O)} \le C(\O,m_0,m_1,p,T) \|f\|_{\L^r(]0,T[\times\O)}.
\end{equation}
Lemma 2.2 (together with Remark 2.3) in \cite{CDF14} states that for any $r\in]1,2[,$ if (with the notations of \cite{CDF14})
\begin{equation}\label{cdtn:almost2}
C_{\frac{m_0+m_1}{2},r}\frac{m_1-m_0}{2}<1,
\end{equation}
then \eqref{est:dual_reg} is true. The proof of Lemma 2.2 (and Remark 2.3) never uses the fact $r<2$, so we can use that \eqref{cdtn:almost2}$\implies$\eqref{est:dual_reg} for any $r>1$. It therefore suffices to check \eqref{cdtn:almost2} for $|r-2|$ small. This is done in the case $r<2$ in the proof of Lemma 3.2 in \cite{CDF14}. For $r>2$, following the ideas of the proof of Lemma 3.2, an appropriate interpolation leads to the bound for all $m>0$, $4>r>2$
\begin{equation}
C_{m,r} \le m^{-\theta} C_{m,4}^{1-\theta},\qquad \text{where} \qquad \frac{\theta}{2}+\frac{1-\theta}{4}=\frac{1}{r},
\end{equation}
that is, for all $m>0$, $4>r>2$,
\begin{equation}
C_{m,p} \le m^{1-4/r} C_{m,4}^{2-4/r}.
\end{equation}
Therefore for all $2<r<4$,
\begin{equation}
C_{\frac{m_0+m_1}{2},r} \frac{m_1-m_0}{2} \le \left(\frac{m_0+m_1}{2}\right)^{1-4/r} C_{\frac{m_0+m_1}{2},4}^{2-4/r} \frac{m_1-m_0}{2}.
\end{equation}
The RHS is a numerical function depending only on $\Omega$, $m_0$, $m_1$ and $r$, and it tends to $\frac{m_1-m_0}{m_0+m_1}<1$ when $r>2$ tends to 2. Therefore there exists a small $\nu_1>0$ depending only on $\Omega$, $m_0$, $m_1$ such that the RHS is $<1$ for all $2<r\le 2+\nu_1$. This implies \eqref{cdtn:almost2} for all $2<r\le 2+\nu_1$.

\paragraph{Second step: uniqueness.}
Let $p\in [2,2+\nu_1[$ and let $u$ be a very weak solution of \eqref{syst:duality_lemma} such that $u\in \L^{p'}$ (where $1/p+1/p'=1$).
 We claim that $u$ is the (unique) strong limit in $\L^{(2+\nu_1)'}$ of the family $(u^\epsilon)_\epsilon$ defined in \eqref{syst:duality_lemma_approx}.

Since $v^\epsilon$ is a smooth function satisfying the homogeneous Neumann boundary condition, we can use it as a test function in \eqref{duallem:weak_form} and against \eqref{syst:duality_lemma_approx}. Subtracting the two equalities thus obtained, we get
\begin{equation}\label{comp:noname}
 \int_0^T\int_\O (u^\epsilon - u) \pa_{t} v^\epsilon + \int_\O (u^\epsilon_{in} - u_{in}) v^\epsilon(0,\cdot) + \int_0^T\int_\O (M^\epsilon u^\epsilon - Mu) \Delta_x v^\epsilon =0.
\end{equation}
Using the smoothness of $v^\epsilon$, $f$ and using $u^\epsilon, u \in \L^{1}$, we can perform rigorously the following computations: we use the definition of $v^\epsilon$ in \eqref{syst:reg_dual_pb}
\begin{equation*}
 \int_0^T\int_\O (u^\epsilon - u) \pa_{t} v^\epsilon = -\int_0^T\int_\O M^\epsilon (u^\epsilon - u) \Delta_x v^\epsilon + \int_0^T\int_\O (u^\epsilon - u) f,
\end{equation*}
and replace in \eqref{comp:noname} to get
\begin{equation*}
\int_0^T\int_\O (u^\epsilon - u) f + \int_\O (u^\epsilon_{in} - u_{in}) v^\epsilon(0,\cdot) + \int_0^T\int_\O (M^\epsilon - M)u \Delta_x v^\epsilon =0.
\end{equation*}
Using \eqref{est:dual_reg} with $r=2+\nu_1$ and H\"older's inequality, it yields
\begin{eqnarray*}
\left|\int_0^T\int_\O (u^\epsilon - u) f\right| &\le& \left|\int_\O (u^\epsilon_{in} - u_{in}) v^\epsilon(0,\cdot) \right|+ \left|\int_0^T\int_\O (M^\epsilon - M)u \Delta_x v^\epsilon\right|\\
&\le& C(\O,m_0,m_1,T) \left( \|u^\epsilon_{in} - u_{in}\|_{\L^2(\O)} + \|(M^\epsilon - M)u\|_{\L^{(2+\nu_1)'}(]0,T[\times\O)} \right) \|f\|_{\L^{2+\nu_1}(]0,T[\times\O)}.
\end{eqnarray*}
By duality, we obtain
\begin{eqnarray*}
\|u^\epsilon - u\|_{\L^{(2+\nu_1)'}(]0,T[\times\O)}
\le C(\O,m_0,m_1,T) \left( \|u^\epsilon_{in} - u_{in}\|_{\L^2(\O)} + \|(M^\epsilon - M)u\|_{\L^{(2+\nu_1)'}(]0,T[\times\O)} \right).
\end{eqnarray*}
Using again H\"older's inequality, we end up with
\begin{eqnarray*}
\|u^\epsilon - u\|_{\L^{(2+\nu_1)'}(]0,T[\times\O)}
\le C(\O,m_0,m_1,T) \left( \|u^\epsilon_{in} - u_{in}\|_{\L^2(\O)} + \|M^\epsilon - M\|_{\L^{s}(]0,T[\times\O)} \|u\|_{\L^{p'}(]0,T[\times\O)} \right),
\end{eqnarray*}
where $1/(2+\nu_1)+1/s=1/p$. Letting $\epsilon$ tend to zero, we get that $\|u^\epsilon - u\|_{\L^{(2+\nu_1)'}(]0,T[\times\O)}\rightarrow 0$.

\paragraph{Third step: uniform bound for the regularized problem.}
Let us choose $f\le0$ in \eqref{syst:reg_dual_pb}. By the maximum principle, we have $v^\epsilon\ge0$. Since $v^\epsilon$ is a smooth function, we can use it as a test function against \eqref{syst:duality_lemma_approx}. We get
\begin{equation}\label{comp:noname1}
 \int_0^T\int_\O u^\epsilon \pa_{t} v^\epsilon + \int_\O u_{in}^\epsilon v^\epsilon(0,\cdot) + \int_0^T\int_\O M^\epsilon u^\epsilon  \Delta_x v^\epsilon + \int_0^T\int_\O R v^\epsilon =0.
\end{equation}
As before, using the smoothness of $v^\epsilon$, $f$ and using $u^\epsilon \in \L^{1}$, we can rigorously compute
\begin{equation}
 \int_0^T\int_\O u^\epsilon \pa_{t} v^\epsilon = -\int_0^T\int_\O M^\epsilon u^\epsilon \Delta_x v^\epsilon + \int_0^T\int_\O u^\epsilon f,
\end{equation}
and replace in \eqref{comp:noname1} to get
\begin{equation}\label{comp:noname2}
-\int_0^T\int_\O u^\epsilon f= \int_\O u^\epsilon_{in} v^\epsilon(0,\cdot) + \int_0^T\int_\O R v^\epsilon \le \int_\O u^\epsilon_{in} v^\epsilon(0,\cdot) + K \int_0^T\int_\O v^\epsilon,
\end{equation}
where we have used the bound $R\le K$ and the nonnegativity of $v^\epsilon$. The last term can be rewritten
\begin{equation}
K \int_0^T\int_\O v^\epsilon = K \int_0^T\int_0^t\int_\O \pa_t v^\epsilon = K \int_0^T\int_0^t\int_\O [-M^\epsilon \Delta_x v^\epsilon +f].
\end{equation}
Replacing in \eqref{comp:noname2} and using \eqref{est:dual_reg} with $r=2-\nu_1$ and H\"older's inequality, it gives
\begin{eqnarray}
\int_0^T\int_\O u^\epsilon (- f) 
\le C(\O,m_0,m_1,T) \left( \|u^\epsilon_{in}\|_{\L^2(\O)} + K \right) \|f\|_{\L^{2-\nu_1}(]0,T[\times\O)}  .
\end{eqnarray}

Using the strong convergence of $u^\epsilon$ in $L^{1}$ and the smoothness of $f$ to pass to the limit in the LHS, we get
\begin{eqnarray}
\int_0^T\int_\O u (- f) 
\le C(\O,m_0,m_1,T) \left( \|u_{in}\|_{\L^2(\O)} + K \right) \|f\|_{\L^{2-\nu_1}(]0,T[\times\O)}.
\end{eqnarray}
Since $u\ge0$ by assumption, this yields by duality
\begin{eqnarray}
\|u\|_{\L^{(2-\nu_1)'}(]0,T[\times\O)}
\le C(\O,m_0,m_1,T) \left( \|u_{in}\|_{\L^2(\O)} + K \right).
\end{eqnarray}

This concludes the proof for some $0<\nu<\min\{(2-\nu_1)'-2,2-(2+\nu_1)'\}$.
\end{proof}

We are now ready to perform the
\begin{proof}[End of proof of i) and proof of ii)]
Let $\a=0$ and $a$, $d$ satisfy condition \eqref{cdt:param3}. Let $(a_\epsilon)_\epsilon\subset [a/2,a]$ and $(d_\epsilon)_\epsilon\subset [d/2,d]$ be two (strictly) increasing families of real numbers such that $a_\epsilon\rightarrow a$ and $d_\epsilon\rightarrow d$ when $\epsilon\rightarrow 0$. This implies that for all $\epsilon>0$, $a_\epsilon<1=1+\a$ and $d_\epsilon<2=2+\a$, so that condition \eqref{cdt:param2} is satisfied with $(a,d)$ replaced by $(a_\epsilon,d_\epsilon)$. Therefore, we can apply the results of Section \ref{sec:existence} with this choice of parameters. For all $\epsilon>0$, there exists a weak solution $(u_\epsilon\ge0,v_\epsilon\ge0)$ of \eqref{sku1}--\eqref{sku4} with $(a,d)$ replaced by $(a_\epsilon,d_\epsilon$). Furthermore, for any fixed $T>0$, this solution satisfies estimates \eqref{th_es1}--\eqref{th_es5} (and \eqref{th_es6u}, resp. \eqref{th_es6v} when $\log u_{in}$, resp. $\log v_{in}$, is in $\L^1(\O)$) with $(u,v)$ replaced by $(u_\epsilon,v_\epsilon)$. Since the constant $C(...,\D)$ is chosen to be continuous in $\D$ on the set $\{\D:a\le1+\a,\,d\le2+\a\}$, the estimates actually give uniform bounds w.r.t. $\epsilon$. As a consequence, we have the following uniform (w.r.t. $\epsilon$) bounds (for all $p>0$)
\begin{equation}\label{es:epsilon1}
u_\epsilon \in \L^{2},\qquad v_\epsilon \in \L^{\infty},\qquad \nabla_x \log (1+u_\epsilon) \in \L^{2}, \qquad \nabla_x v^p_\epsilon \in \L^{2}.
\end{equation}
Let us check that we can apply Lemma \ref{jeveuxunnom} to $u_\epsilon$. We define $R_\epsilon:=(r_u-r_a u_\epsilon^{a_\epsilon}-r_b v_\epsilon^b)u_\epsilon$ and $M_\epsilon:=d_u+d_\a+d_\b v_\epsilon^\b$. For all $\epsilon>0$, $R_\epsilon\in \L^{q_\epsilon}$ where $q_\epsilon:=2/(1+a_\epsilon)>1$, and $$R_\epsilon \le \sup_{u\ge0}(r_u-r_a u^{a_\epsilon})u=r_u\frac{a_\epsilon}{a_\epsilon+1}\left(\frac{r_u}{r_a(a_\epsilon+1)}\right)^{1/a_\epsilon}\le r_u\left(\frac{r_u}{r_a}\right)^{1/a_\epsilon}\le K,$$
where $K:= r_u\left(\frac{r_u}{r_a}\right)^{2/a}$ if $r_u>r_a$, $K:= r_u\left(\frac{r_u}{r_a}\right)^{1/a}$ if $r_u<r_a$. Thanks to the bound \eqref{th_es1} for $v_\epsilon$, we also have for all $\epsilon>0$, $M_\epsilon \in L^\infty$ and
$$ 0<m_0:= d_u+d_\a \le M_\epsilon \le d_u+d_\a+d_\b \max\left\{ \sup_\Omega v^\b_{in}, \left(\frac{r_v}{r_c}\right)^{\b/c} \right\} =:m_1<\infty.$$
We can therefore apply Lemma \ref{jeveuxunnom}, which yields the bound for all $\epsilon>0$
\begin{equation}\label{es:epsilon2}
\|u_\epsilon\|_{L^{2+\nu}} \le C(T,\O,m_0,m_1)\, \left( \|u_{in}\|_{L^2(\Omega)}+K \right),
\end{equation}
where $\nu=\nu(\O,m_0,m_1)>0$. Note that the constants $K$, $m_0$ and $m_1$ being independent of $\epsilon$, this bound is also independent of $\epsilon$.

From \eqref{es:epsilon1}, \eqref{es:epsilon2}, we have the uniform (w. r. t. $\epsilon$) bounds
\begin{eqnarray}
\label{es:unif5.1eps}
\nabla_x u_\epsilon \in \L^{1+\nu/(4+\nu)},& \qquad \nabla_x v^\epsilon \in \L^{2},&\\
\label{es:unif5.4eps}(r_u-r_a u_\epsilon^{a_\epsilon}-r_b v_\epsilon^b)u_\epsilon \in \L^{1+\nu/2},& \qquad (r_v-r_c v_\epsilon^c-r_d u_\epsilon^{d_\epsilon})v_\epsilon \in \L^{1+\nu/2},&\\
\label{es:unif5.5eps}
(d_u+d_\a+d_\b v_\epsilon^\b) u_\epsilon \in \L^{2+\nu},& \qquad (d_v+d_\c v_\epsilon^\c) v_\epsilon \in \L^{\infty},&
\end{eqnarray}
where we used the computation $\nabla_x u_\epsilon= (1+u_\epsilon)\nabla_x \log(1+u_\epsilon)\in \L^{2+\nu}\times\L^2$ for \eqref{es:unif5.1eps} and the assumptions on the parameters $a_\epsilon<a \le 1$ and $d_\epsilon<d \le 2$ for \eqref{es:unif5.4eps}. Using the equations of $(u_\epsilon,v_\epsilon)$, estimates \eqref{es:unif5.4eps}--\eqref{es:unif5.5eps} yield a uniform (w. r. t. $\epsilon$) bound for $\pa_t u_\epsilon$, $\pa_t v_\epsilon$ in $\L^{1+\nu/2}(]0,T[,\W^{-2,1+\nu/2}(\O))$. Combined with the gradient estimates \eqref{es:unif5.1eps}, this allows us to apply Aubin-Lions Lemma, so that, up to a subsequence, when $\epsilon \longrightarrow 0$,
\begin{equation}\label{cv_eps:u_v}
u_\epsilon\longrightarrow u\ge0 \text{ in } \L^{1}, \qquad v_\epsilon\longrightarrow v\ge 0 \text{ in } \L^{1}.
\end{equation}
We obtain estimates \eqref{th_es1}--\eqref{th_es6v} for $(u,v)$ with the same arguments as in the passage to the limit in estimates \eqref{es:unif1}--\eqref{es:unif4bisv} in Section \ref{sec:existence}, and we obtain \eqref{th_es7} thanks to \eqref{es:epsilon2} and Fatou's lemma. As in Section \ref{sec:existence}, estimates \eqref{th_es1}--\eqref{th_es5} enable us to check that $(u,v)\in\L^{\max(1+a,d)}\times\L^\infty$ with $\nabla_x[(d_u+d_\a+d_\b v^\b)u]$ and $\nabla_x[(d_v+d_\c v^\c)v]$ in $\L^1$. Using furthermore estimates \eqref{es:unif5.4eps} and \eqref{es:unif5.5eps}, it is classical to check that $(u,v)$ is a global weak solution of \eqref{sku1}--\eqref{sku4}.
\end{proof}

\subsection{The case $\c=0$}\label{subsec:gamma0}
\begin{proof}[Proof of iii)]
When $\c=0$, the system satisfied (in the weak sense) by $v$ can be rewritten as
\begin{align}
\pa_t v - \, d_v' \Delta_x v  = f \qquad \text{in } \R_+\times\Omega,\\
\nabla_x v\cdot n = 0 \qquad \text{on } \R_+\times\pa\Omega,\\
v(0,\cdot) = v_{in} \qquad \text{in } \Omega,
 \end{align}
where $d_v'=d_v+d_\c>0$, $f=v\, (r_v - r_{c}\,v^c - r_d\, u^d)\in\L^{q}_{loc}(\R_+\times\overline{\O})$ (thanks to the estimates \eqref{th_es1}, \eqref{th_es2} and \eqref{th_es7}) and $v_{in}\in W^{2,q}(\O)$ (by assumption). Using the properties of the heat kernel, we get the $\L^{q}_{loc}(\R_+\times\overline{\O})$ bounds for $\pa_t v$ and $\nabla_x^2 v$ stated in \eqref{th_es8}. The bound for $\nabla_x v$ is obtained by interpolating \eqref{th_es1} and the bound for $\nabla_x^2 v$.
\end{proof}

\medskip

{\bf{Acknowledgement}}: 
The research leading to this paper was funded by the french "ANR blanche" project Kibord: ANR-13-BS01-0004.
\medskip

\bibliographystyle{abbrv}

\bibliography{Tres15}

\end{document}